\newtheorem{thm}{Theorem}[section]
\newtheorem{prop}[thm]{Proposition}
\newtheorem{lemma}[thm]{Lemma}
\newtheorem{remark}[thm]{Remark}
\newcommand{\R}{\mathbb{R}}
\newcommand{\PROB}{{\mathbb{P}}}
\newcommand{\E}{{\mathbb{E}}}
\renewcommand{\eqref}[1]{(\ref{#1})}
\renewcommand{\P}{\mathbb{P}}
\renewcommand{\R}{\mathbb{R}}
\begin{document}
\bibliographystyle{plain}
\title{Boundary crossing identities for Brownian motion
 and some
 nonlinear
 ode's}
\vspace{.5cm}
\author{L. Alili}
\address{Department of Statistics, The University of Warwick,
Coventry CV4 7AL, United Kingdoms}
\email{l.alili@Warwick.ac.uk}


\author{P. Patie}
\address{School of Operations Research and Information Engineering, Cornell University, Ithaca, NY 14853.}
\email{pp396@cornell.edu}

\thanks{This work was partially supported by the Actions de Recherche Concert\'ees (ARC) IAPAS, a fund of the Communaut\'ee fran\c{c}aise de Belgique. The first author is greatly indebted to the Agence Nationale de la Recherche  for  the research grant ANR-09-Blan-0084-01. The second author would like to thank P. Lescot for several discussions on the Lie group method.}
\maketitle

\begin{abstract}
 We start by introducing a nonlinear  involution operator which maps the space of solutions of Sturm-Liouville equations into the space of solutions of  the associated equations which turn out to be nonlinear ordinary differential equations.
 We study some algebraic and analytical properties of this involution operator as well as some properties of  a two-parameter family of operators
describing the set of solutions of
Sturm-Liouville equations. Next, we show how a specific
composition of these mappings allows to connect, by  means of a simple analytical expression,  the law of the
first passage time of a Brownian motion over a curve to a
two-parameter  family of curves. We offer three different proofs of this fact  which may be of independent interests. In particular, one is  based on the construction of parametric time-space harmonic transforms of the law of some Gauss-Markov processes. Another one, which is of algebraic nature, relies on the Lie group symmetry  methods applied to the  heat equation and reveals that our two-parameter transformation is the  unique non-trivial one.
\end{abstract}

\vspace{5mm}

\section[Introduction and  main results]{Introduction and  main results}
\label{introduction-preliminaries} Let $B=(B_t)_{t\geq 0}$ be a
standard Brownian motion, starting at $0$, defined on a filtered probability space $(\Omega,(\mathcal{F})_{t\geq0},\mathcal{F},\P)$.   We are concerned with the distribution of the stopping time
  $$T^{f} = \inf \left\{t>0; \:B_t =
f(t) \right\}$$
where $f\in C([0,\infty), \mathbb{R})$ satisfies $f(0) \neq 0$ and
 the usual convention $\inf\{\emptyset \}=\infty$ applies. $C(I,J)$, for  some subintervals $I$ and  $J \subseteq \mathbb{R}$, stands for the space of continuous functions from $I$ into $J$. This boundary crossing problem has been  intensively studied for over a century and traces back to Bachelier's thesis \cite{Bachelier}. Although different methodologies have been suggested for solving it in some special instances, the finding of an explicit expression for the law of $T^f$ for a general curve remains an open problem.  We refer to \cite{Alili-patie-JTP-09} for a survey of these techniques. The  purpose of this paper is to describe a simple and explicit analytical expression relating the distributions of the first passage time of the Brownian motion to some two-parameter family of curves, extending the result obtained in  \cite{Alili-patie-JTP-09}. Besides, we shall provide three completely different proofs among which two may be of independent interests.   One of the proofs relies on the study of some Gauss-Markov processes and provides a procedure to produce Doob's $h$-transform of the law of these processes whenever the covariance functions are obtained as the image of   a two-parameter transformation relating solutions of a Sturm-Liouville equation. This approach motivated us to introduce and study two families of nonlinear transformations which turn out to be useful for describing the set of solutions of some strongly nonlinear second order differential equations in terms of solutions of the associated Sturm-Liouville equations.   Another proof, which is of algebraic nature, is based on the study of the Lie group symmetry of the heat equation, showing in particular that our main identity is the only non trivial one which is attainable.

Before stating our main results, we   introduce  some notations which will be used throughout the paper.  First, let the nonlinear operator $\tau$ be defined on the space of functions whose reciprocals are square integrable in some (possibly infinite) interval of $\R^+$  by
\[ \tau f(.)=\int_0^{.}f^{-2}(s) ds\]
and set
\begin{eqnarray*}
{A(a,b)}&=& \left\{ \pm f\in C\left([0,a), \mathbb{R}^+\right);
\: \tau f
(a) = b \right\}
\end{eqnarray*}
\noindent where $a$ and $b$ are positive reals.  Observe that if we denote by $A_{\infty}$ the set of continuous functions which are of constant sign on some non-empty interval with $0$ as left endpoint, then we have the following decomposition
\[
A_{\infty}= \bigcup_{ a
> 0} \bigcup_{
b>0}A(a,b).\]
We also introduce the family $(\Pi^{\alpha,\beta})_{(\alpha, \beta)
\in \mathbb{R}^*\times \mathbb R} $ of nonlinear operators  acting on  $A_{\infty}$  which are defined, for each  fixed couple of reals $\alpha$ and $\beta$, by
\begin{eqnarray} \label{eq:transf-pi}
\Pi^{\alpha,\beta} f = f\left(\alpha + \beta \tau f \right).
 \end{eqnarray}
Next, let $\varrho$ be the inversion operator acting on the space of continuous monotone functions, i.e. $\varrho f \circ f (t) =t$ where $\circ$ denotes the composition of functions.  We use the same symbol to define the composition of operators and, for example, by $\varrho \circ \tau f$ we mean the image by $\varrho$ of $\tau f$. Now, using
the nonlinear operator
\begin{eqnarray*}
  \Sigma  f  =\frac{1}{f \left(\varrho \circ \tau f\right)},
\end{eqnarray*}
  we construct the family $(S^{\alpha,\beta})_{\alpha\in \R^*, \beta \in \mathbb{R}}$  of operators as follows
 \begin{equation}\label{defin-S}
S^{\alpha, \beta} f=\Sigma\circ\Pi^{\alpha,-\beta}\circ
 \Sigma f.
 \end{equation}
Finally, for all $a>0$, we set
\begin{equation*}
a_{{\alpha,\beta}} =
\left\{
\begin{array}{ll}
\frac{a}{\alpha(\alpha-\beta a)}  \quad & \textrm{ if } \alpha(\alpha-\beta a)>0,\\
+\infty &  \textrm{ otherwise,}
\end{array}
\right.
\end{equation*}
and, for $f\in A_{\infty}$, we write
\begin{equation*}
a_{\alpha, \beta}^{f} =
\left\{
\begin{array}{ll}
  \quad a & \textrm{ if } \alpha(\alpha-\beta a)>0,\\
\varrho \circ \tau f\left(\frac{\alpha}{\beta}\right) &  \textrm{ otherwise.}
\end{array}
\right.
\end{equation*}
We note that, as $a\rightarrow \infty$, we have
\begin{equation*}
a_{{\alpha,\beta}} \rightarrow \zeta_{{\alpha,\beta}} =
\left\{
\begin{array}{ll}
-\frac{1}{\alpha\beta }  \quad & \textrm{ if } \alpha\beta <0,\\
+\infty &  \textrm{ otherwise,}
\end{array}
\right.
\end{equation*}
and the inequality ${a}_{\alpha, \beta}\leq \zeta_{\alpha, \beta}$ holds. Now, we are ready to state the first main result of this paper whose proof is postponed to Section \ref{section-gou-ou}.

\begin{thm} \label{thm1}
\begin{enumerate}
\item For $\alpha\neq 0$ and $\beta$ reals, the mapping $S^{\alpha, \beta}:A_{\infty} \rightarrow A_{\infty}$ is a linear operator admitting the simple representation
\begin{eqnarray} \label{elementary-mappings}
S^{\alpha, \beta}f(t)=\left(\frac{1+ \alpha \beta t
}{\alpha}\right)f\left(\frac{\alpha^2 t}{1+ \alpha \beta t}\right).
\end{eqnarray}
Furthermore, if $ f \in A(a,b)$ then $S^{\alpha,\beta} f \in  A(a_{{\alpha,\beta}},b^{f}_{\alpha, \beta})$.
\item Let $\mu$ be a positive Radon measure  on $\R^{+}$. Then,
there exists a unique positive, increasing, concave and differentiable function $\mathfrak{f}$  with $\mathfrak{f}(0)=1$, which satisfies the following nonlinear differential equation
 \begin{eqnarray} \label{eq:dnleq} f^3 f''
=-\mu\left(\tau {f}\right) \end{eqnarray}
on $\mathbb{R}^+$,
where $f''$ is the second derivative of $f$ considered in
 the sense of distributions. Furthermore, $\{S^{\alpha,\beta} \mathfrak{f};\: \alpha>0,\beta \geq0\}$ is the set of positive solutions of (\ref{eq:dnleq}).
\end{enumerate}
\end{thm}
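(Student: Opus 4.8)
The plan is twofold. For the first part I would obtain \eqref{elementary-mappings} by a direct (if slightly lengthy) computation that composes the three nonlinear operators, and then read off both linearity and the domain statement from that closed form; for the second part the idea is that conjugation by $\Sigma$ turns the nonlinear equation \eqref{eq:dnleq} into an ordinary \emph{linear} Sturm--Liouville equation, so that everything reduces to classical linear ODE theory together with the formula of part~1.

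Two elementary facts come first. Writing $\theta=\varrho\circ\tau f$ for the reciprocal of the increasing function $\tau f$, one has $\theta'=f^{2}\circ\theta$, hence $(\Sigma f)'=-f'\circ\theta$ and $(\Sigma f)^{-2}=\theta'$; integrating the last relation yields the clock identity $\tau\circ\Sigma f=\varrho\circ\tau f$, and substituting it back shows that $\Sigma$ is an involution of $A_{\infty}$. Also $\Pi^{\alpha,\beta}f=\alpha f+\beta\,f\,\tau f$, so that $\tau$ of its argument is the affine function $\alpha+\beta\tau f$ of the clock. Now compute $S^{\alpha,\beta}f=\Sigma\circ\Pi^{\alpha,-\beta}\circ\Sigma f$: with $G:=\Pi^{\alpha,-\beta}\Sigma f=(\Sigma f)(\alpha-\beta\,\varrho\circ\tau f)$, the substitution $s=\varrho\circ\tau f(r)$, $\mathrm dr=f^{2}(s)\,\mathrm ds$ makes the $f$-factors cancel in $\int_{0}^{t}G^{-2}$, leaving the elementary integral $\int_{0}^{\varrho\circ\tau f(t)}(\alpha-\beta s)^{-2}\,\mathrm ds=\varrho\circ\tau f(t)/\bigl(\alpha(\alpha-\beta\,\varrho\circ\tau f(t))\bigr)$; inverting this rational function of $\varrho\circ\tau f(t)$ and using $\varrho\circ\varrho\circ\tau f=\tau f$ gives $\varrho\circ\tau G(t)=\tau f\bigl(\alpha^{2}t/(1+\alpha\beta t)\bigr)$, and feeding this into $\Sigma G=1/(G\circ\varrho\circ\tau G)$ and simplifying produces exactly \eqref{elementary-mappings}. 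Linearity is then immediate, since \eqref{elementary-mappings} is pre‑composition with the fixed map $\psi(t)=\alpha^{2}t/(1+\alpha\beta t)$ followed by multiplication by the fixed function $(1+\alpha\beta t)/\alpha$. For the last assertion of part~1 the same substitution gives $\tau(S^{\alpha,\beta}f)=\tau f\circ\psi$; since $\psi$ is increasing with $\psi(0)=0$ and $\psi(t)=a$ precisely at $t=a/(\alpha(\alpha-\beta a))$, a short discussion of the Möbius map $\psi$ (including the position of its pole) according to the sign of $\alpha(\alpha-\beta a)$ shows that $S^{\alpha,\beta}f$ has constant sign exactly on $[0,a_{\alpha,\beta})$ and that its clock attains $b^{f}_{\alpha,\beta}$ there, i.e. $S^{\alpha,\beta}f\in A(a_{\alpha,\beta},b^{f}_{\alpha,\beta})$.

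For part~2 the key lemma is: $f\in A_{\infty}$ is a positive solution of \eqref{eq:dnleq} on $\R^{+}$ if and only if $\varphi:=\Sigma f$ is a positive solution of the associated linear Sturm--Liouville equation $\varphi''=\mu\varphi$. Indeed, from $f=\Sigma\varphi$ one gets, as in part~1 but now with $\tau f$ in place of $\theta$, $f'=-\varphi'\circ\tau f$ and $f''=-(\varphi''\circ\tau f)\,f^{-2}$, hence $f^{3}f''=-(\varphi''\circ\tau f)/(\varphi\circ\tau f)$, which equals $-\mu(\tau f)$ precisely when $\varphi''=\mu\varphi$ (read as an equality of Stieltjes measures after the $C^{1}$ time change $\tau f$, the only place needing care with the distributional second derivative when $\mu$ has atoms). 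Now invoke the classical theory: $\varphi''=\mu\varphi$ has a unique minimal positive solution $\varphi_{0}$ on $\R^{+}$ with $\varphi_{0}(0)=1$, and $\varphi_{0}$ is positive, decreasing and convex; by reduction of order $\tilde\varphi:=\varphi_{0}\,\tau\varphi_{0}$ is the complementary solution, positive and increasing, with $\tilde\varphi(0)=0$, $\tilde\varphi'(0)=1$ (in particular $\tilde\varphi(t)\ge t$), and crucially $\alpha\varphi_{0}-\beta\tilde\varphi=\varphi_{0}(\alpha-\beta\,\tau\varphi_{0})=\Pi^{\alpha,-\beta}\varphi_{0}$. Set $\mathfrak{f}:=\Sigma\varphi_{0}$; then $\mathfrak{f}$ is a positive solution of \eqref{eq:dnleq} with $\mathfrak{f}(0)=1$, its concavity is automatic because the right side of \eqref{eq:dnleq} is $\le0$, a positive concave function on $\R^{+}$ is necessarily non-decreasing, and its regularity follows from that of $\varphi_{0}$ through $\mathfrak{f}=\Sigma\varphi_{0}$. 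That each $S^{\alpha,\beta}\mathfrak{f}$ with $\alpha>0$, $\beta\ge0$ solves \eqref{eq:dnleq} (and is defined on all of $\R^{+}$, since $\mathfrak{f}$ is and $\psi$ maps $\R^{+}$ into $\R^{+}$) is a one‑line check from \eqref{elementary-mappings}: with $\lambda(t)=(1+\alpha\beta t)/\alpha$ and $\psi'=\lambda^{-2}$, the function $g=\lambda\cdot(\mathfrak{f}\circ\psi)$ has $g''=\lambda^{-3}(\mathfrak{f}''\circ\psi)$ (the cross terms cancel), whence $g^{3}g''=(\mathfrak{f}^{3}\mathfrak{f}'')\circ\psi=-\mu(\tau\mathfrak{f})\circ\psi=-\mu(\tau g)$ using $\tau g=\tau\mathfrak{f}\circ\psi$. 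Conversely, if $g$ is any positive solution on $\R^{+}$ then $\varphi:=\Sigma g$ solves $\varphi''=\mu\varphi$, so $\varphi=\alpha\varphi_{0}-\beta\tilde\varphi$ for some reals; $\varphi(0)=1/g(0)>0$ gives $\alpha>0$, and $\beta\ge0$ is forced, for if $\beta<0$ then $\varphi\ge|\beta|\tilde\varphi\ge|\beta|t$ grows at least linearly, so $\varphi^{-2}$ is integrable at the right end of its positivity interval and $\Sigma\varphi$ would be defined only on a bounded interval --- impossible, since $g=\Sigma\varphi$ lives on all of $\R^{+}$. The same integrability remark identifies the positivity interval of $\varphi$ with $\{t:\tau g(t)<\infty\}$ and thus yields $g=\Sigma\Pi^{\alpha,-\beta}\varphi_{0}=\Sigma\Pi^{\alpha,-\beta}\Sigma\mathfrak{f}=S^{\alpha,\beta}\mathfrak{f}$. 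Finally, any function with the properties in the statement equals some $S^{\alpha,\beta}\mathfrak{f}$, whose value at $0$ is $1/\alpha$, so $\mathfrak{f}(0)=1$ forces $\alpha=1$, and the remaining parameter is pinned down by requiring the clock $\tau\mathfrak{f}$ to exhaust $\R^{+}$ (equivalently, $\Sigma\mathfrak{f}=\varphi_{0}$, the minimal solution), which forces $\beta=0$; this gives uniqueness.

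The only real work is in part~2. Making the conjugation lemma watertight when $\mu$ carries atoms --- that is, justifying the chain rule for the distributional second derivative under the nonlinear time change $\tau f$ --- is one delicate point. The other is the converse direction, where one must show that the sign constraint $\beta\ge0$ is exactly the condition for the $\Sigma$‑image of a solution of $\varphi''=\mu\varphi$ to be a globally defined solution of \eqref{eq:dnleq}; this rests on controlling the growth and the (possibly finite) first zero of $\alpha\varphi_{0}-\beta\tilde\varphi$ together with the integrability of its reciprocal square. Part~1, by contrast, is a routine if lengthy computation.
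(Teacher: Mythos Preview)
Your approach is essentially the same as the paper's: for part~1 you compute $\tau\circ\Pi^{\alpha,-\beta}\circ\Sigma f$, invert, and apply $\Sigma$ (exactly the route of the paper's Proposition~2.3), and for part~2 you use the same key lemma that $\Sigma$ conjugates \eqref{eq:dnleq} to the Sturm--Liouville equation $\varphi''=\mu\varphi$. Two remarks. First, a slip: in your substitution you should have $\mathrm dr=f^{-2}(s)\,\mathrm ds$, not $f^{2}(s)\,\mathrm ds$; the cancellation you describe then goes through, so the conclusion is unaffected. Second, your treatment of uniqueness is in fact more careful than the paper's: you correctly observe that the stated conditions (positive, increasing, concave, differentiable, $\mathfrak f(0)=1$) do not by themselves single out $\mathfrak f$ among the $S^{1,\beta}\mathfrak f$ with $\beta\ge 0$, and you supply the missing normalization $\tau\mathfrak f(\infty)=\infty$ (equivalently, $\Sigma\mathfrak f$ is the minimal positive solution). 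The paper's proof simply asserts uniqueness after exhibiting $\mathfrak f=\Sigma\varphi$, so your added sentence is a genuine clarification rather than a deviation.
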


We carry on by providing an example illustrating Theorem \ref{thm1}. To this end, let us consider the positive measure $\mu_{a,b}$,  for some fixed $a>0$ and $b>0$, which is specified on $\R^+$ by
\begin{equation*}
\mu_{a,b}(dt) = \frac{a}{\left(1+bt\right)^{2}}dt.
\end{equation*}
We easily check that the function $\mathfrak{f}_{\gamma}$  given by
\begin{equation} \label{eq:fg}
\mathfrak{f}_{\gamma}(t)=(\kappa t +1)^{\gamma}
\end{equation}
where  $\kappa= \sqrt{4a+b^2} \quad \hbox{and}\quad \gamma =\frac{1}{2}\left(1-\frac{b}{\kappa}\right) \in (0,1)$
  satisfies the requirements of item {\em 2.~}of Theorem \ref{thm1}. That is $\mathfrak{f}_{\gamma}$ is positive, concave and increasing and it solves the nonlinear differential equation \eqref{eq:dnleq} with $\mu=\mu_{a,b}$. Moreover, for any $\alpha,\beta>0$, the function
\begin{eqnarray*}
S^{\alpha,\beta} \mathfrak{f}_{\gamma}(t)&=& \frac{((\kappa\alpha^2+\alpha\beta  )t+1)^{\gamma}}{\alpha(1+\alpha \beta t)^{\gamma-1}}
\end{eqnarray*}
is also a positive solution of \eqref{eq:dnleq}.

We proceed by pointing out that the mapping $ S^{\alpha, \beta}$
can also be defined on the space of probability measures. For instance, in the absolutely
continuous case, we associate to $\mu(dt) = h(t)dt$ the image $S^{\alpha, \beta}(
\mu)(dt) = S^{\alpha, \beta}h(t) dt$. We are now ready to  state our second main result which relates the distributions of the family of stopping times $(T^{S^{\alpha, \beta}f})_{\alpha\in\R^*,\beta\in \R}$.
\begin{thm} \label{thm2}
Let $f \in C\left([0,\infty), \mathbb{R}\right)$  be such that $f(0) \neq 0$ and $\alpha\neq 0$, $\beta$ two fixed reals. Then, for any  $t<\zeta_{\alpha, \beta}$, we have the relationship
\begin{equation}\label{eq:fptr}
\mathbb{P}\left(T^{S^{\alpha, \beta} f}\in
dt\right)=\alpha^3 \left(1 + \alpha\beta
t\right)^{-\frac{5}{2}}e^{-\frac{\alpha \beta
}{2(1+ \alpha\beta
t)}(S^{\alpha, \beta} f(t))^2}S^{\alpha,\beta}\left(\mathbb{P}( T^{f} \in dt)\right).
\end{equation}
\end{thm}
We provide in Section \ref{Doob} below three proofs of this Theorem.  We now use this result to compute the distribution of the stopping time
\[ T^{S^{\alpha,\beta} \mathfrak{f}_{2}}=\inf\left\{ 0<t<\zeta_{\alpha, \beta};\:
B_t=\frac{((\kappa\alpha^2+\alpha\beta )t+1)^{2}}{\alpha(1+\alpha \beta t)}\right\}
\]
where $\alpha\neq 0$, $\beta$ are reals and $\mathfrak{f}_{2}$ is defined in \eqref{eq:fg}. We recall that  Salminen \cite{Sal}, see also
Groeneboom \cite{Groeneboom}, has derived the probability density function of the distribution of
$T^{f_2}$ where  $f_2(t)=1+\kappa^2t^2$.  Writing $p^{f_2}(t)dt=\P(T^{f_2} \in dt)$, he found the latter to be
\begin{equation*}
p^{f_2}(t) = 2(\kappa^2 c)^{2} e^{-\frac{2}{3}\kappa^4
t^3}
\sum_{k=0}^{\infty}\frac{Ai\left(z_k+2c\kappa^2\right)}{Ai'(z_k)}e^{\frac{z_k}{c}
t}
\end{equation*}
for $t>0$, where $(z_k)_{k \geq 0}$ is the decreasing sequence of negative
zeros of the Airy function $Ai$ and
$c=(2\kappa^4)^{-\frac{1}{3}}$. Now,  by means of  the
Cameron-Martin formula, we obtain, with $\mathfrak{f}_{2}(t)=(1+\kappa t)^2=f_2(t)+2\kappa t$,
\begin{equation*}
p^{\mathfrak{f}_2}(t) = 2(\kappa^2 c e^{-\kappa})^{2} h_{\kappa}\left(t\right)\sum_{k=0}^{\infty}\frac{Ai\left(z_k+2c\kappa^2\right)}{Ai'(z_k)}e^{\frac{z_k}{c} t}
\end{equation*}
for $t>0$, where    $-\log h_{\kappa}(t)=2\kappa^2 t
\left(1+\frac{1}{3}\kappa^2 t^2+\kappa t\right)
.$ Finally, by applying Theorem \ref{thm2}, we obtain
\begin{equation*}
p^{S^{\alpha,\beta} \mathfrak{f}_{2}}(t) = \frac{2\alpha^{2}(\kappa^2 ce^{-\kappa})^{2}}{(1+\alpha \beta t)^{3/2}}e^{-\frac{ \beta((\kappa\alpha^2+\alpha\beta )t+1)^{4}
}{2\alpha(1+ \alpha\beta
t)^3}}h_{\kappa}\left(\frac{\alpha^2t}{1+\alpha \beta t}\right)\sum_{k=0}^{\infty}\frac{Ai\left(z_k+2c\kappa^2\right)}{Ai'(z_k)}e^{
\frac{z_k\alpha^2 t}{c(1+\alpha \beta t)}}
\end{equation*}
for $ t<\zeta_{\alpha, \beta}$.
Note that if we take $\beta = -\alpha \kappa$ then we recover the example treated in \cite{Alili-patie-JTP-09}.

\section{Proof of  Theorem \ref{thm1}} \label{section-gou-ou}
We start by listing  some basic algebraic and analytic properties of the operator $\Sigma$.
\begin{prop} \label{prop:sigma}
For any $a,b>0$, $\lambda \in \R$ and $f\in A_{\infty}$, we have the following assertions.

\begin{enumerate}
\item  $\tau = \varrho \circ \tau \circ \Sigma$.
\item  $ \Sigma
\left( A(a,b) \right)= A({b},{ a})$.
\item $ \Sigma$ is an involution operator, that is $ \Sigma \circ \Sigma f = f$.
\item  $\lambda \Sigma \lambda f = (\Sigma f)_{\lambda}$ with $f_{\lambda}(t)=f(\lambda^2t)$. In particular, $ \Sigma  (-f) = -\Sigma f$.
\item If $f$ is increasing (resp.~ differentiable and convex) then $\Sigma f$ is decreasing (resp.~ differentiable and concave).
\end{enumerate}
\end{prop}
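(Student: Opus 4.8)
The plan is to prove assertion 1 first, since it is the only genuinely computational step, and then to read off assertions 2 and 3 from it essentially for free; assertions 4 and 5 will be handled by direct manipulation. Throughout I would fix $f\in A_{\infty}$ and, using that $\Sigma(-g)=-\Sigma g$ for positive $g$ (which is immediate from $\tau(-g)=\tau g$ and $(-g)(\cdot)=-g(\cdot)$, and is also part of assertion 4), reduce to the case $f>0$ on some interval $[0,a)$. I would then write $g=\tau f$ for the strictly increasing absolutely continuous function $g(t)=\int_0^t f^{-2}(s)\,ds$ and $h=\varrho\circ\tau f=g^{-1}$ for its inverse, so that $\Sigma f=1/(f\circ h)$.

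For assertion 1, I would compute $\tau(\Sigma f)(u)=\int_0^u(\Sigma f)^{-2}(s)\,ds=\int_0^u f^{2}(h(s))\,ds$ and apply the substitution $s=g(v)$, for which $ds=g'(v)\,dv=f^{-2}(v)\,dv$; this is legitimate because $g$ is an absolutely continuous increasing bijection, and the integrand collapses to $1$, giving $\tau(\Sigma f)(u)=\int_0^{h(u)}dv=h(u)=\varrho\circ\tau f(u)$. This is precisely $\tau=\varrho\circ\tau\circ\Sigma$, and it is the single fact on which the rest rests. Assertion 3 is then formal: applying the involution $\varrho$ to $\tau(\Sigma f)=\varrho\circ\tau f$ gives $\varrho\circ\tau(\Sigma f)=\tau f$, hence $\Sigma(\Sigma f)=1/\big((\Sigma f)\circ\tau f\big)$, and since $(\Sigma f)(\tau f(t))=1/f(\varrho\circ\tau f(\tau f(t)))=1/f(t)$ we obtain $\Sigma\circ\Sigma f=f$.

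For assertion 2, I would observe that if $f\in A(a,b)$ then $g\colon[0,a)\to[0,b)$ is an increasing bijection with $g(a^-)=b$, so $h\colon[0,b)\to[0,a)$ with $h(b^-)=a$, whence $\Sigma f=1/(f\circ h)$ is a positive continuous function on $[0,b)$; by assertion 1, $\tau(\Sigma f)(b)=h(b^-)=a$, i.e. $\Sigma f\in A(b,a)$. This gives $\Sigma(A(a,b))\subseteq A(b,a)$, and applying $\Sigma$ together with assertion 3 yields the reverse inclusion. Assertion 4 follows from $\tau(\lambda f)=\lambda^{-2}\tau f$, which gives $\varrho\circ\tau(\lambda f)(t)=(\varrho\circ\tau f)(\lambda^{2}t)$, and therefore $\Sigma(\lambda f)(t)=1/\big(\lambda f((\varrho\circ\tau f)(\lambda^{2}t))\big)=\lambda^{-1}(\Sigma f)(\lambda^{2}t)=\lambda^{-1}(\Sigma f)_{\lambda}(t)$; the case $\lambda=-1$ recovers $\Sigma(-f)=-\Sigma f$.

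Finally, for assertion 5 I would record the clean derivative identity valid when $f$ is differentiable: since $h'=1/(g'\circ h)=f^{2}\circ h$, the chain rule gives $(\Sigma f)'(t)=-f'(h(t))h'(t)/f^{2}(h(t))=-f'(h(t))$ and $(\Sigma f)''(t)=-f''(h(t))f^{2}(h(t))$. Hence $f$ increasing forces $(\Sigma f)'\le 0$ (so $\Sigma f$ is decreasing — alternatively, without differentiability, $h$ is increasing, so $f\circ h$ is increasing and $1/(f\circ h)$ decreasing), and $f$ convex forces $(\Sigma f)''\le 0$, so $\Sigma f$ is concave, with differentiability of $\Sigma f$ read off from the same formula. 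I do not expect a real obstacle here; the only points demanding care are justifying the change of variables in assertion 1 and keeping track of the intervals of definition and the boundary values $g(a^-),h(b^-)$ in assertion 2.
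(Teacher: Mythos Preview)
Your argument is essentially identical to the paper's: the same change-of-variables computation for assertion~1, the same deduction of assertions~2 and~3 from it, the same scaling identity for assertion~4, and the same derivative formula $(\Sigma f)'(t)=-f'(\varrho\circ\tau f(t))$ for assertion~5. One small caveat: for the concavity claim in assertion~5 you invoke $(\Sigma f)''(t)=-f''(h(t))f^{2}(h(t))$, but the hypothesis is only that $f$ is \emph{differentiable} and convex, so $f''$ need not exist; the paper instead argues directly that $(\Sigma f)'=-f'\circ h$ is non-increasing because $f'$ is non-decreasing (by convexity) and $h$ is increasing, which you should adopt.
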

\begin{proof} We obtain the
first assertion by observing that
$\tau \circ \Sigma f (t)= \int_0^{t}(f(\varrho \circ \tau f(s))^2ds=\varrho \circ \tau f(t)$ and using the fact that $\varrho$ is an involution. Next, let $f \in A(a,b)$ and note that  $\tau f$ is an homeomorphism from
$[0,a)$ into $[0,b)$. Thus, the mapping  $t\mapsto \Sigma f(t)$ is plainly continuous and positive on $[0,b)$.  Hence, observing that $\varrho \circ \tau f (b)=a$, the second item follows from the previous one.
We deduce from the second statement that  $ \Sigma \circ \Sigma
f = \Sigma \left( 1/f\left( \varrho \circ \tau f \right)\right) =f\left( \varrho \circ \tau f\right)
  = f$ which gives the third statement.  The fourth statement is an easy consequence of the identity $\varrho \circ \tau \lambda f (t) = \varrho \circ \tau f (\lambda ^2 t)$. The first claim of the  last assertion is straightforward.  Finally, noting that $(\Sigma f)'(t)=-f'\left(\varrho\circ \tau f(t)\right)$, the last item is obtained by using the fact that the mapping $t \mapsto \varrho\circ \tau f(t)$ is increasing.
\end{proof}
In what follows, we describe some interesting properties satisfied by the family of linear operators $(\Pi^{\alpha,\beta})_{\alpha\in\R^*,\beta\in \R}$ defined in \eqref{eq:transf-pi}. Before doing that, we recall how these operators  are related to a class of ordinary second order differential equations. More precisely, recalling that  $\mu$ denotes a positive Radon measure  on $\R^+$, we consider  the following  Sturm-Liouville equation
\begin{equation}\label{Sturm-Liouville-equation}
\phi''=\mu \phi
\end{equation}
where $\phi''$ is defined in the sense of distributions. Clearly, if $\phi$ is a solution to (\ref{Sturm-Liouville-equation}) then
so is $\Pi^{0,1}\phi$. Actually, the set of solutions to equation (\ref{Sturm-Liouville-equation}) is the vectorial
space $\{\Pi^{\alpha,\beta}\phi=\alpha \phi+\beta \Pi^{0,1}\phi;\alpha,\beta \in \R\}$. Observe, that all positive solutions are
convex and  described by the set $\{\Pi^{\alpha,\beta}\varphi=\alpha \varphi+\beta \Pi^{0,1}\varphi;\alpha >0,\beta \geq0\}$ where $\varphi$ is the unique
positive decreasing solution satisfying
$\varphi(0)=1$.  Moreover, $\varphi$ satisfies $\lim_{t\rightarrow
\infty}\varphi(t)\in[0,1]$ and the strict inequality
$\varphi(\infty)<1$ except in the trivial case $\mu\equiv0$ which we exclude. We point out that $\varphi$ is also differentiable on the support of $\mu$. Moreover,
under the condition $\int (1+s)\mu(ds) <\infty$ we know that
$\lim_{t\rightarrow
\infty}\varphi(t) >0$. We refer to \cite[Appendix \S 8]{Revuz-Yor-99} for a detailed account on these facts.
We are now ready to state the following result where the study is restricted to $\alpha\geq 0$ since the other case can be recovered by using  the identity $\Pi^{-\alpha,\beta} = -\Pi^{\alpha,-\beta}$.
\begin{prop} \label{prop:pi-prop}  Let $ (\alpha,\beta) \in \R^+ \times\R$, $(\alpha', \beta')\in[0,\infty)\times \R$ and $\phi\in A(a,b)$ for some positive reals $a$ and $b$. Then, we have the following assertions.
\begin{enumerate}
\item   $\Pi^{\alpha,\beta}=\alpha\Pi^{1,\beta/\alpha}$.
\item  $\Pi^{\alpha,\beta} \phi\in A(b_{\alpha, -\beta},a_{\alpha,-\beta}^{\phi})$.
\item $\Pi^{\alpha,\beta}  \circ \Pi^{\alpha',\beta'}  =
\Pi^{\alpha \alpha', \alpha\beta' + \beta/\alpha'}.$  In
particular, $\Pi^{\alpha,\beta}$  is the inverse operator of $\Pi^{1/\alpha,-\beta} $ and $(\Pi^{1,\beta})_{\beta \geq 0}$ is a semigroup.
\end{enumerate}
\end{prop}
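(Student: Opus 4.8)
The plan is to base all three assertions on one change-of-variables identity. Written out, \eqref{eq:transf-pi} reads $\Pi^{\alpha,\beta}f=\alpha f+\beta\,f\,\tau f$, so assertion~1 is immediate: for $\alpha\neq0$, $\Pi^{\alpha,\beta}f=\alpha\bigl(f+\tfrac{\beta}{\alpha}\,f\,\tau f\bigr)=\alpha\,\Pi^{1,\beta/\alpha}f$. The key fact behind the other two is the elementary way $\Pi^{\alpha,\beta}$ acts on $\tau f$: substituting $u=\tau f(s)$, so that $du=f^{-2}(s)\,ds$, gives for $\beta\neq0$
\[
\tau\bigl(\Pi^{\alpha,\beta}f\bigr)(t)=\int_0^t\frac{ds}{f^{2}(s)\bigl(\alpha+\beta\,\tau f(s)\bigr)^{2}}=\int_0^{\tau f(t)}\frac{du}{(\alpha+\beta u)^{2}}=\frac{\tau f(t)}{\alpha\bigl(\alpha+\beta\,\tau f(t)\bigr)},
\]
and $\tau(\Pi^{\alpha,0}f)=\tau f/\alpha^{2}$; equivalently $1/\tau(\Pi^{\alpha,\beta}f)=\alpha^{2}/\tau f+\alpha\beta$, so under $f\mapsto 1/\tau f$ the operator $\Pi^{\alpha,\beta}$ becomes the affine map $x\mapsto\alpha^{2}x+\alpha\beta$.

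Assertion~3 then follows by applying this twice. With $g=\Pi^{\alpha',\beta'}f=f\,(\alpha'+\beta'\tau f)$ and $\alpha'\neq0$, the identity gives $\tau g=\tau f/(\alpha'(\alpha'+\beta'\tau f))$, hence
\[
\Pi^{\alpha,\beta}g=g\,(\alpha+\beta\,\tau g)=f\,(\alpha'+\beta'\tau f)\Bigl(\alpha+\frac{\beta\,\tau f}{\alpha'(\alpha'+\beta'\tau f)}\Bigr)=f\Bigl(\alpha\alpha'+\bigl(\alpha\beta'+\tfrac{\beta}{\alpha'}\bigr)\tau f\Bigr),
\]
the factor $\alpha'+\beta'\tau f$ cancelling once the bracket is cleared, and this is $\Pi^{\alpha\alpha',\,\alpha\beta'+\beta/\alpha'}f$. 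The remaining claims are specialisations: $\Pi^{1,0}f=f$ so $\Pi^{1,0}=\mathrm{Id}$, whence $\Pi^{\alpha,\beta}\circ\Pi^{1/\alpha,-\beta}=\Pi^{1,0}=\mathrm{Id}$ and likewise with the factors reversed, so $\Pi^{\alpha,\beta}$ and $\Pi^{1/\alpha,-\beta}$ are mutually inverse; and $\Pi^{1,\beta}\circ\Pi^{1,\beta'}=\Pi^{1,\beta+\beta'}$ exhibits $(\Pi^{1,\beta})_{\beta\ge0}$ as a one-parameter semigroup.

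For assertion~2, $\Pi^{\alpha,\beta}\phi=\phi\,(\alpha+\beta\tau\phi)$ inherits continuity and constant sign from $\phi$ exactly on the largest subinterval of $[0,a)$ on which $\alpha+\beta\tau\phi$ does not vanish; since $\tau\phi$ is an increasing homeomorphism of $[0,a)$ onto $[0,b)$, that subinterval is $[0,a)$ itself when $\alpha+\beta\tau\phi$ stays positive there (in particular whenever $\beta\ge0$) and is $[0,\varrho\circ\tau\phi(-\alpha/\beta))$ otherwise. The two parameters of the resulting $A(\cdot,\cdot)$-membership are then the right endpoint of this subinterval and the limit along it of $\tau(\Pi^{\alpha,\beta}\phi)(t)=\tau\phi(t)/(\alpha(\alpha+\beta\tau\phi(t)))$ supplied above; a short case split on the sign of $\alpha(\alpha+\beta b)$ (which decides whether this limit is finite) identifies them with $a^{\phi}_{\alpha,-\beta}$ and $b_{\alpha,-\beta}$, with the usual convention that these equal $+\infty$ in the degenerate case. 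I anticipate no real obstacle here: the substitution $u=\tau f(s)$ does all the work, and the only point genuinely needing care is this endpoint bookkeeping in assertion~2 — locating the possible zero of $\alpha+\beta\tau\phi$ and checking finiteness of the attendant $\tau$-limit, so that $a^{\phi}_{\alpha,-\beta}$ and $b_{\alpha,-\beta}$ come out exactly.
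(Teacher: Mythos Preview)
Your proof is correct and follows essentially the same route as the paper: both hinge on the identity $\tau(\Pi^{\alpha,\beta}\phi)=\tau\phi/\bigl(\alpha(\alpha+\beta\tau\phi)\bigr)$, use it directly for the composition law in item~3, and do the same case analysis on the zero of $\alpha+\beta\tau\phi$ for item~2. Your remark that $\Pi^{\alpha,\beta}$ acts as the affine map $x\mapsto\alpha^{2}x+\alpha\beta$ on $1/\tau f$ is a nice way to see the group structure at a glance, but it is a repackaging rather than a different argument; note also that your endpoint bookkeeping yields $\Pi^{\alpha,\beta}\phi\in A(a^{\phi}_{\alpha,-\beta},\,b_{\alpha,-\beta})$, which is exactly what the paper's own proof produces (the order in the displayed statement appears to be a typographical slip).
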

\begin{proof}
The first item is obvious. Next, recalling that
$\Pi^{\alpha,\beta} \phi = \phi (\alpha + \beta \tau {\phi})$, we obtain
\begin{equation} \label{eq:i1}
\tau \circ {\Pi^{\alpha,\beta} \phi}(.) =\frac{1}{\alpha}\frac{\tau {\phi}(.)}{\alpha + \beta \tau {\phi}(.)}.
\end{equation}
Now, if $\beta>0$ then  $\Pi^{\alpha,\beta} \phi $ is continuous and positive on $[0,a)$ with
 $\tau \circ {\Pi^{\alpha,\beta}
\phi}(a)=\frac{1}{\alpha}\frac{b}{\alpha + \beta b}.$  If $\beta <0$ then
$\Pi^{\alpha,\beta} \phi $ is continuous and positive on $\left[0,a \wedge
\varrho \circ \tau {\phi}\left(-\frac{\alpha}{\beta}\right)\right)$ and we have  $a<\varrho \circ \tau {\phi}\left(-\frac{\alpha}{\beta}\right)$ when
$\alpha + \beta b>0$. Thus, the second statement follows from $\tau\circ {\Pi^{\alpha,\beta}\phi}\left(\varrho \circ \tau {\phi}\left(-\frac{\alpha}{\beta}\right)\right)
= \infty$. Next, we readily deduce from \eqref{eq:i1} that
\begin{eqnarray*}
\Pi^{\alpha,\beta} \circ \Pi^{\alpha',\beta'} \phi &=& \Pi^{\alpha,\beta} \phi \left(\alpha'+\beta'\tau {\phi}\right)\\
&=& \phi \left( \alpha'+ \beta'\tau {\phi}\right)\left( \alpha+ \frac{\beta}{\alpha'} \frac{\tau{\phi}}{\alpha'+\beta'\tau \phi}\right)\\
&=& \phi \left(\alpha \alpha'+ \left(\alpha\beta'+\frac{\beta}{\alpha'}\right)\tau {\phi}\right)\\
&=&\Pi^{\alpha \alpha', \alpha\beta' + \beta/\alpha'}\phi
\end{eqnarray*}
which completes the proof of the Proposition.
\end{proof}
Now, we are  ready to study some properties of  the family of linear operators  $(S^{\alpha, \beta})_{\alpha \in \R^*,\beta\in \R}$ which is defined in  \eqref{defin-S}. In particular, the next result contains the claims of item {\em{1.~}}of Theorem \ref{thm1}.
 \begin{prop} \label{prop:s} Let $ (\alpha,\beta)$ and $(\alpha', \beta') \in \R^+ \times\R$. Then, the following assertions hold true.
 \begin{enumerate}
 \item If $ f \in A(a,b)$ then $S^{\alpha,\beta} f \in  A(a_{\alpha,\beta},b^{f}_{\alpha, \beta})$. Moreover, formula (\ref{elementary-mappings}) holds true.
\item  $
S^{\alpha,\beta}\circ
S^{\alpha',\beta'}=S^{\alpha\alpha',\alpha\beta'+\frac{\beta}{\alpha'}}$.
In particular, $(S^{1,\beta})_{\beta \geq 0}$ is a semigroup of linear operators.
\item Assuming that $f \in A(a,b)$ is concave and differentiable  then we have the following statements.
\begin{enumerate}
\item $S^{\alpha,\beta}f $ is also concave and  differentiable.
\item  If  $f(0)>0$ and
$f\left(\frac{1}{\beta_0}\right)=0$ for some $\beta_0>0$,  then, for any
$\frac{\beta}{\alpha}\geq \beta_0$, $S^{\alpha,\beta}f$ is  non-decreasing on $\R^+$.
 \end{enumerate}
  \end{enumerate}
 \end{prop}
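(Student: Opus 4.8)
The plan is to establish the explicit representation~\eqref{elementary-mappings} first, since items~2 and~3 then follow from it by direct computation. \emph{For item~1}, the guiding remark is that a positive continuous function on an interval $[0,c)$ is completely determined by $\tau$ applied to it, because $\frac{d}{dt}\tau g(t)=g(t)^{-2}$; so it suffices to compute $\tau(S^{\alpha,\beta}f)$ and compare it with $\tau$ of the right-hand side of~\eqref{elementary-mappings}. Set $g=\Sigma f$, so that $\tau g=\varrho\circ\tau f$ by Proposition~\ref{prop:sigma}(1), and apply~\eqref{eq:i1} with $\beta$ replaced by $-\beta$ to get
\[
\tau\bigl(\Pi^{\alpha,-\beta}g\bigr)=\frac{\tau g}{\alpha\,(\alpha-\beta\,\tau g)}.
\]
Using Proposition~\ref{prop:sigma}(1) again, $\tau(S^{\alpha,\beta}f)=\tau\bigl(\Sigma\,\Pi^{\alpha,-\beta}g\bigr)$ is the functional inverse of the map just displayed; substituting $u(t)=\alpha^2t/(1+\alpha\beta t)$ and using both $\alpha-\beta u(t)=\alpha/(1+\alpha\beta t)$ and the fact that $\varrho\circ\tau f$ is the inverse of $\tau f$, one checks that this inverse is precisely $t\mapsto\tau f(u(t))$. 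On the other hand, since $u'(t)=\alpha^2/(1+\alpha\beta t)^2$, the function $F(t)=\frac{1+\alpha\beta t}{\alpha}\,f(u(t))$ satisfies $F^{-2}=u'\cdot(f^{-2}\circ u)$, hence $\tau F=\tau f\circ u$ by the change of variable $v=u(r)$. Therefore $\tau(S^{\alpha,\beta}f)=\tau F$, so $S^{\alpha,\beta}f=F$, which is~\eqref{elementary-mappings}. Linearity of $S^{\alpha,\beta}$ and the inclusion $S^{\alpha,\beta}(A_{\infty})\subseteq A_{\infty}$ are immediate from this representation, and the precise target $A(a_{\alpha,\beta},b^{f}_{\alpha,\beta})$ is then obtained either by composing Proposition~\ref{prop:sigma}(2) with Proposition~\ref{prop:pi-prop}(2), or by reading off from $\tau F=\tau f\circ u$ and the identity $u(a_{\alpha,\beta})=a$ (respectively $u(t)\uparrow\alpha/\beta$) the two regimes in the definition of $a_{\alpha,\beta}$.

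\emph{For item~2}, the definition~\eqref{defin-S} together with the involution property $\Sigma\circ\Sigma=\mathrm{id}$ of Proposition~\ref{prop:sigma}(3) gives
\[
S^{\alpha,\beta}\circ S^{\alpha',\beta'}=\Sigma\,\Pi^{\alpha,-\beta}(\Sigma\circ\Sigma)\Pi^{\alpha',-\beta'}\Sigma=\Sigma\,\bigl(\Pi^{\alpha,-\beta}\circ\Pi^{\alpha',-\beta'}\bigr)\Sigma,
\]
and Proposition~\ref{prop:pi-prop}(3) yields $\Pi^{\alpha,-\beta}\circ\Pi^{\alpha',-\beta'}=\Pi^{\alpha\alpha',\,-(\alpha\beta'+\beta/\alpha')}$, so the composition equals $S^{\alpha\alpha',\,\alpha\beta'+\beta/\alpha'}$; specialising to $\alpha=\alpha'=1$ gives $S^{1,\beta}\circ S^{1,\beta'}=S^{1,\beta+\beta'}$, the semigroup property.

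\emph{For item~3}, both statements come from differentiating~\eqref{elementary-mappings}. With $u(t)=\alpha^2t/(1+\alpha\beta t)$, a short computation, in which the two terms proportional to $f'\circ u$ cancel because $\frac{d}{dt}\frac{\alpha}{1+\alpha\beta t}=-\beta\,u'(t)$, gives
\[
\bigl(S^{\alpha,\beta}f\bigr)'(t)=\beta f(u(t))+(\alpha-\beta u(t))\,f'(u(t)),\qquad\bigl(S^{\alpha,\beta}f\bigr)''(t)=\frac{\alpha^3}{(1+\alpha\beta t)^3}\,f''(u(t)),
\]
the latter in the distributional sense. On the domain of $S^{\alpha,\beta}f$ one has $1+\alpha\beta t>0$ (because $a_{\alpha,\beta}\le\zeta_{\alpha,\beta}$), so the prefactor is positive, and $f''\le0$ forces $\bigl(S^{\alpha,\beta}f\bigr)''\le0$; together with the evident differentiability of~\eqref{elementary-mappings} this proves~(a). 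For~(b), put $\lambda=\beta/\alpha\ge\beta_0>0$; as $\beta>0$, the map $u$ is strictly increasing with $u(t)\uparrow\alpha/\beta=1/\lambda\le1/\beta_0$, so $s:=u(t)\in[0,1/\beta_0)$ for every $t\ge0$. The first displayed identity then reads
\[
\bigl(S^{\alpha,\beta}f\bigr)'(t)=\beta\Bigl(f(s)+\bigl(\tfrac{1}{\lambda}-s\bigr)f'(s)\Bigr),
\]
which is $\beta$ times the value at $1/\lambda$ of the tangent line to $f$ at $s$. By concavity this tangent line lies above the graph of $f$ on $[0,1/\beta_0]$, so its value at $1/\lambda$ is at least $f(1/\lambda)\ge0$ (here $f\ge0$ on $[0,1/\beta_0]$ with $f(1/\beta_0)=0$, the strict positivity on $[0,1/\beta_0)$ itself following from concavity and $f(0)>0$). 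Hence $\bigl(S^{\alpha,\beta}f\bigr)'\ge0$, i.e.\ $S^{\alpha,\beta}f$ is non-decreasing on $\R^{+}$.

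The substantive step is item~1: once one recognises that the composition of the three nonlinear operators is cleanly controlled by following only the scalar quantity $\tau$, which then pins down both the function and its domain, the remaining verifications are mechanical. I expect the bulk of the work, and the main source of error, to be the exact identification of the image set $A(a_{\alpha,\beta},b^{f}_{\alpha,\beta})$, which requires matching the piecewise definitions of $a_{\alpha,\beta}$ and $b^{f}_{\alpha,\beta}$ across the two regimes $\alpha(\alpha-\beta a)>0$ and $\alpha(\alpha-\beta a)\le0$, the latter producing $a_{\alpha,\beta}=+\infty$; items~2 and~3 are then routine.
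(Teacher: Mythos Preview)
Your proof is correct. For items~1 and~2 you follow essentially the paper's route: both arguments track the scalar quantity $\tau$ through the composition $\Sigma\circ\Pi^{\alpha,-\beta}\circ\Sigma$, invert the resulting M\"obius expression to identify $u(t)=\alpha^2 t/(1+\alpha\beta t)$, and use the involution $\Sigma\circ\Sigma=\mathrm{id}$ together with Proposition~\ref{prop:pi-prop}(3) for the composition law. Your presentation differs only cosmetically, matching $\tau(S^{\alpha,\beta}f)$ against $\tau F$ rather than unwrapping $S^{\alpha,\beta}f$ directly.

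Item~3 is where you genuinely diverge. The paper argues abstractly: for~3a it invokes the definition $S^{\alpha,\beta}=\Sigma\circ\Pi^{\alpha,-\beta}\circ\Sigma$ and the concave/convex flip of Proposition~\ref{prop:sigma}(5); for~3b it notes that when $\beta/\alpha\ge\beta_0$ the image $S^{\alpha,\beta}f$ is positive on all of $\R^+$, and a concave function positive on $[0,\infty)$ must be non-decreasing. You instead differentiate the explicit formula~\eqref{elementary-mappings} and obtain the closed forms
\[
(S^{\alpha,\beta}f)'(t)=\beta f(u(t))+(\alpha-\beta u(t))f'(u(t)),\qquad (S^{\alpha,\beta}f)''(t)=\frac{\alpha^3}{(1+\alpha\beta t)^3}\,f''(u(t)),
\]
from which concavity is immediate, and you read~3b off as the tangent-line inequality $f(s)+(1/\lambda-s)f'(s)\ge f(1/\lambda)\ge 0$. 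Your approach is more explicit and self-contained (in particular it sidesteps the question of why $\Pi^{\alpha,-\beta}$ should preserve convexity, which the paper's abstract argument leaves implicit); the paper's approach has the virtue of explaining structurally why $S^{\alpha,\beta}$ inherits these properties from the $\Sigma,\Pi$ building blocks.
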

 \begin{proof}
Item {\em{1.~}}and the first part of item {\em{2.}}~follow readily  from the definition of $S^{\alpha,\beta}$  and propositions \ref{prop:sigma} and \ref{prop:pi-prop}. Then, from Proposition \ref{prop:sigma}, we get that
\begin{eqnarray} \label{eq:ps}
\Pi^{\alpha,-\beta} \circ \Sigma f (t) = \frac{\alpha-\beta \varrho \circ \tau f}{f ( \varrho \circ \tau f )}(t).
\end{eqnarray}
Moreover, we see that
\begin{eqnarray*}
\tau \circ  {\Pi^{\alpha,-\beta} \circ \Sigma f} (t) =
\frac{1}{\alpha}\frac{\varrho \circ \tau f}{\alpha - \beta \varrho \circ \tau f}(t).
\end{eqnarray*}
Inverting yields
\begin{equation}\label{eq:vps}
\varrho\circ \tau \circ  \Pi^{\alpha,-\beta} \circ \Sigma f (t) = \tau f\left(\frac{\alpha^2 t}{1+\alpha \beta t}\right).
\end{equation}
Finally, combining  \eqref{eq:ps} and \eqref{eq:vps}, we can write
\begin{eqnarray*}
 S^{\alpha,\beta} f (t)
 &=& \Sigma  \frac{\alpha-\beta \varrho \circ \tau f}{f \left(\varrho \circ \tau f  \right)}(t)
\\
&=&  \frac{f  (\varrho \circ \tau f)}{\alpha-\beta \varrho \circ \tau f} \circ  \tau f \left(\frac{\alpha^2 t}{1+\alpha \beta t}\right)
\end{eqnarray*}
which is easily simplified to get \eqref{elementary-mappings}. It is clear from \eqref{elementary-mappings} that $S^{\alpha,\beta}$ is a linear operator. Next, since $\Sigma$ is an involution, item {\em{2.}}~follows from Proposition \ref{prop:pi-prop}
and
\begin{eqnarray*}
S^{\alpha,\beta}\circ S^{\alpha',\beta'} &=& \Sigma \circ  \Pi^{\alpha,-\beta}\circ \Pi^{\alpha',-\beta'}  \circ \Sigma\\
 &=& \Sigma \circ  \Pi^{\alpha \alpha', -\alpha\beta' - \beta/\alpha'}  \circ \Sigma\\
  &=& S^{\alpha \alpha', \alpha\beta' + \beta/\alpha'}.
 \end{eqnarray*}
Item {{\em 3.a.~}}follows readily from the definition of $(S^{\alpha, \beta})_{\alpha \in \R^+,\beta\in \R}$ combined with the propositions \ref{prop:sigma} and \ref{prop:pi-prop}.  Finally, we note that for $\frac{\beta}{\alpha} \geq \beta_0$,
$S^{\alpha,\beta}f$ is positive since $t\mapsto \alpha^2 t/(1+\alpha \beta t)$ is increasing on $\R^+$, which completes the proof by means of the concavity property.
\end{proof}

We are now ready to  complete the proof of Theorem \ref{thm1} by proving item {{\em 2.}} We  need to show that the image of \eqref{eq:dnleq} by $\Sigma$ is equation \eqref{Sturm-Liouville-equation}. To that end, assume that $f$ satisfies \eqref{eq:dnleq} and
let us write $\phi=\Sigma f $. We deduce from the relationship $\phi(\tau f(\cdot)) f(\cdot) =1$ that $\phi'(\tau f(\cdot)) = -f'(\cdot)$ and
$f''(\cdot) = - \phi''(\tau f(\cdot))/{f^{2}(\cdot)}$
in the sense of distributions. Using \eqref{eq:dnleq} we obtain $\phi''(\tau f(\cdot))=\mu\circ \tau f(\cdot) \phi\circ \tau f(\cdot)$. Thus, $\phi$ solves \eqref{Sturm-Liouville-equation}. Conversely, similar arguments show that if $\phi$ solves \eqref{Sturm-Liouville-equation} then $f=\Sigma \phi$ solves \eqref{eq:dnleq}. It follows from Proposition \ref{prop:sigma} that  the function  $\mathfrak{f}=\Sigma \varphi$, where $\varphi$ is defined  just before Proposition \ref{prop:pi-prop}, satisfies the required properties. We conclude that there exists a unique increasing, concave and differentiable function $\mathfrak{f}$ such that $\mathfrak{f}(0)=1$.

\begin{remark}  If $f \in C([0,\infty),\R^+)$ is a  solution to \eqref{eq:dnleq}  then it admits the representation  $f(t)=S^{1/f(0),\beta}\mathfrak{f}(t)$ where $\beta=f'(0)-\mathfrak{f}'(0)/f(0)$ and if $\beta>0$ then $
\lim_{t\rightarrow \infty} f(t)/t= \beta \mathfrak{f}(\alpha/\beta)>\beta$.
\end{remark}

\section{Proof of  Theorem \ref{thm2}}\label{Doob}
We actually derive three proofs of Theorem \ref{thm2}. The first one is  almost straightforward and hinges on a previous result obtained by the authors   in \cite{Alili-patie-JTP-09} whereas the second one reveals some interesting results concerning time-space harmonic transforms of the law of  Gauss-Markov processes and explains the connections with the analytical result stated   in Theorem \ref{thm1}. The last one relies on the Lie group techniques applied to the heat equation.  Before developing the  proofs,  we mention that the symmetry  of the Brownian motion implies the following identity in distribution
\begin{equation} \label{eq:idsa}
T^{S^{\alpha,\beta}f}\stackrel{d}{=}T^{S^{|\alpha|,\textrm{sgn}(\alpha)\beta}f}
\end{equation}
for any $(\alpha,\beta) \in \R^*\times \R$.  Hence, it is enough to consider the case $\alpha>0$.  For convenience, we set $f^{\alpha,\beta}=S^{\alpha,\beta}f$.
\subsection{The direct approach}
   We get from item {\em{2.}}~of Proposition \ref{prop:s} that  $S^{\alpha,\beta}=   S^{1,\alpha\beta} \circ S^{\alpha,0}$ which when combined with \cite[Theorem 1]{Alili-patie-JTP-09} gives our result. To be more precise, recall that, from the aforementioned reference,  we have
   \begin{equation*}\label{switching-identity}
\mathbb{P}\left(T^{f^{1,\beta}}\in dt\right)=(1+\beta
t)^{-5/2}e^{-\frac{\beta}{2}{\frac{({f^{1,\beta}}(t))^2}{1+\beta
  t}}}
S^{1,\beta}\left(\mathbb{P}\left( T^{f} \in dt\right)\right)
\end{equation*}
for all $t<\zeta_{1,\beta}$. Thus, by using  $f^{\alpha,\beta}=   S^{1,\alpha\beta} \circ S^{\alpha,0}f $, we can write \begin{eqnarray*}
  \mathbb{P}\left(T^{f^{\alpha,\beta}}\in dt\right) &=& (1+\alpha \beta t)^{-5/2} e^{-\frac{\alpha \beta}{2(1+\alpha \beta t)} (S^{1,\alpha \beta} \circ S^{\alpha,0} f(t))^2}S^{1,\alpha \beta}\left(\mathbb{P}\left(T^{S^{\alpha,0} f}\in dt\right)\right),
  \end{eqnarray*}
  for $t<\zeta_{\alpha, \beta}$.
    Next, using the scaling property of $B$, we obtain the equality in distribution $ T^{f}\stackrel{d}{=}\alpha^2T^{f^{\alpha,0}}$ from which  we easily deduce that
$  \mathbb{P}\left(T^{f^{\alpha,0} }\in dt\right)=\alpha^3 S^{\alpha,0}\left(\mathbb{P}\left(T^f\in dt\right)\right)$. Using the linearity  and again the composition properties  of $S^{\alpha,\beta}$, we  get
    \begin{eqnarray*}
  \mathbb{P}\left(T^{f^{\alpha,\beta}}\in dt\right) &=& \alpha^3 (1+\alpha \beta t)^{-5/2} e^{-\frac{\alpha \beta}{2(1+\alpha \beta t)} (f^{\alpha, \beta}(t))^2}S^{\alpha, \beta}\left(\mathbb{P}\left(T^f\in dt\right)\right)
  \end{eqnarray*}
  which combined with formula \eqref{eq:idsa} completes the proof of Theorem \ref{thm2}.
\subsection{The proof via Gauss-Markov processes}
  For the second approach,  we take  $\phi
\in A_{b}^{a} \cap AC([0,b))$, where $AC([0,b))$ is the space of absolutely continuous functions  on $[0,b)$, and consider the associated Gauss-Markov process of
Ornstein-Uhlenbeck type with parameter $\phi$. More specifically, we denote by  $\P^{\phi}=(\P^{\phi}_x)_{x\in \R}$ the family of probability measures  of the process $X=(X_t)_{0\leq t<b}$ which is defined to be the unique strong solution to the stochastic differential equation
\begin{equation*} \label{eq:def1}
dX_t = \frac{\phi'(t)}{\phi(t)}X_t dt + dB_t,
 \quad X_0=x,
\end{equation*}
for $0\leq t<b$. Clearly, $X$ is the Gaussian process  given, for each fixed $0\leq t<b$, by
 \begin{equation*} \label{eq:def2}
X_t = \phi(t) \left( x +  \int_0^t
\phi^{-1}(s)  \: dB_s \right),
\end{equation*}
which has mean and covariance function
\begin{equation*}
m(t)= x\phi(t), \quad
v(s,t)=\phi(t\vee s)\Pi^{0,1}\phi(s\wedge t), \quad 0\leq s, t\leq b,
\end{equation*}
respectively. To simplify, we assume throughout that $\phi(0)=1$. Note that if we take $\phi(t)=e^{-\lambda t}$, for some $\lambda
>0$, then $X$ is the classical Ornstein-Uhlenbeck process. Moreover,
if $X_0$ is a centered and normally distributed random variable,  with variance $1/2
\lambda$, which is independent of $B$, then $X$ is the unique  Gauss-Markov process which is stationary  see e.g \cite[Excercise (1.13), p.86]{Revuz-Yor-99}. Our motivation for introducing this process stems from the following simple connection between two types of boundary crossing problems.
\begin{lemma} \label{Proposition-connecting-times}
Let, for any $y \in \R$, $T_y=\inf\{0<t<b;\: \phi(t)   \int_0^t
\phi^{-1}(s)  \: dB_s =y\}$. Then, for any $f \in A(a,b)$,  writing $\phi=\Sigma f$ and $T=T_1$,   the identity
\begin{equation}\label{eq:doob-representation}
T^f =\tau \phi \left(T\right)
\end{equation}
holds almost surely.
In particular, $\PROB_0^{\phi}\left(T<b
\right)=\PROB\left(T^{f} <a\right)$.
\end{lemma}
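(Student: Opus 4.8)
The plan is to reduce the first passage problem for $B$ over the curve $f$ to a first passage problem for the stochastic integral $\int_0^t\phi^{-1}(s)\,dB_s$ at the constant level, via a deterministic time change governed by $\tau\phi$. First I would recall that $\phi=\Sigma f$ with $f\in A(a,b)$, so by item~2.~of Proposition~\ref{prop:sigma} we have $\phi\in A(b,a)$, and by item~1.~of the same proposition $\tau\phi=\varrho\circ\tau f$; in particular $\tau\phi$ is an increasing homeomorphism from $[0,b)$ onto $[0,a)$, with inverse $\tau f:[0,a)\to[0,b)$. This is the deterministic dictionary between the two time scales.

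Next I would set $M_t=\int_0^t\phi^{-1}(s)\,dB_s$, a continuous local martingale with $\langle M\rangle_t=\int_0^t\phi^{-2}(s)\,ds=\tau\phi(t)$, so by the Dambis--Dubins--Schwarz representation $M_t=\widetilde B_{\tau\phi(t)}$ for a Brownian motion $\widetilde B$ (run up to time $a$). The key computation is then to rewrite the event defining $T^f$. Since $B_u=f(u)$ if and only if, substituting $u=\tau\phi(t)$ (equivalently $t=\tau f(u)$), we have $\widetilde B_{u}=f(\tau\phi(t))$; but by the very definition of $\Sigma$, $f(\tau\phi(t))=f(\varrho\circ\tau f(t))=1/\phi(t)=1/\Sigma f(t)$, so the curve $f$ in the $u$-scale becomes the curve $1/\phi(t)$ in the $t$-scale. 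Hence $B$ hits $f$ at time $u$ precisely when $\phi(t)M_t=\phi(t)\widetilde B_{\tau\phi(t)}=\phi(t)f(\tau\phi(t))\cdot\frac{\widetilde B_{\tau\phi(t)}}{f(\tau\phi(t))}$ equals $1$, i.e. when $\phi(t)\int_0^t\phi^{-1}(s)\,dB_s=1$, which is the event defining $T=T_1$. Taking the infimum on both sides and using that $\tau\phi$ is a continuous increasing bijection $[0,b)\to[0,a)$ gives $T^f=\tau\phi(T)$ almost surely, which is \eqref{eq:doob-representation}. The final assertion $\PROB^{\phi}_0(T<b)=\PROB(T^f<a)$ then follows because $\tau\phi(t)\to a$ as $t\uparrow b$, so $\{T<b\}=\{\tau\phi(T)<a\}=\{T^f<a\}$, and under $\PROB^\phi_0$ the process $\phi(t)\int_0^t\phi^{-1}\,dB_s$ is exactly $X_t$ with $X_0=0$.

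Alternatively, and perhaps more cleanly, I would avoid the DDS detour and argue directly: for $u\in[0,a)$ put $t=\tau f(u)\in[0,b)$, so that $u=\tau\phi(t)$, and observe that $B_u-f(u)$ and $\phi(t)\bigl(\int_0^t\phi^{-1}(s)\,dB_s\bigr)-1$ have the same sign, because $f(u)=1/\phi(t)>0$ and $\phi(t)>0$ allow one to write $\operatorname{sgn}(B_u-f(u))=\operatorname{sgn}\bigl(\phi(t)B_{\tau\phi(t)}-1\bigr)$ once one has identified $B_{\tau\phi(t)}$ with $\int_0^t\phi^{-1}(s)\,dB_s$ in law as processes via the time change. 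The main obstacle is precisely this identification: making rigorous the statement that the time-changed Brownian motion $\widetilde B_{\tau\phi(t)}$ \emph{is} $\int_0^t\phi^{-1}(s)\,dB_s$ built from the \emph{same} $B$, rather than merely equal in law, so that the almost-sure identity \eqref{eq:doob-representation} (not just equality in distribution of the two hitting times) holds. This requires either invoking DDS with the explicit clock $\langle M\rangle_t=\tau\phi(t)$ and checking the bijectivity and continuity of $\tau\phi$ on $[0,b)$ carefully at the endpoint $b$, or, going the other way, starting from a Brownian motion $\beta$, setting $B_u$ via the inverse time change and verifying that $B$ is a Brownian motion — a routine but slightly delicate localization argument near the (possibly infinite) right endpoints $a$ and $b$. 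Everything else is bookkeeping with the operators $\tau$, $\varrho$, $\Sigma$ already established in Proposition~\ref{prop:sigma}.
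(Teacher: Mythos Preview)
Your approach is essentially the paper's: apply Dambis--Dubins--Schwarz to $M_t=\int_0^t\phi^{-1}\,dB_s$ with clock $\langle M\rangle_t=\tau\phi(t)$, then use the identities $\tau\phi=\varrho\circ\tau f$ and $f(\varrho\circ\tau f)=1/\phi$ from Proposition~\ref{prop:sigma} to turn the level-$1$ hitting time of $\phi M$ into the curve-$f$ hitting time of the DDS Brownian motion, related by the time change $u=\tau\phi(t)$. The paper's proof does exactly this, in three lines, starting from $T$ rather than from $T^f$: it writes $T=\inf\{t>0;\phi(\varrho\circ\tau\phi(t))W_{\tau\phi(t)}=1\}=\varrho\circ\tau\phi(T^{\Sigma\phi})=\tau f(T^f)$ and then inverts.

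One clarification is worth making. Your ``main obstacle'' is not really an obstacle, but your first paragraph misdirects slightly: you begin with the event $\{B_u=f(u)\}$ for the \emph{original} $B$ and try to pull it back to $M$, and there is indeed no pathwise link in that direction. The clean route (and the paper's) goes the other way: start from the defining event of $T$, namely $\phi(t)M_t=1$, substitute $M_t=W_{\tau\phi(t)}$, and change variable $u=\tau\phi(t)$; this yields $\tau\phi(T)=\inf\{u>0;W_u=f(u)\}$ as an almost sure identity, where the $T^f$ appearing is the first passage time of the DDS Brownian motion $W$ over $f$, not of the original $B$. The paper is equally casual about this relabelling, and for the distributional consequences used downstream it makes no difference.
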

\begin{proof} By means of Dumbis, Dubins-Schwarz theorem, see e.g.~\cite[Theorem V.1.6]{Revuz-Yor-99}, there exists a standard Brownian motion $(W_t)_{0\leq t<b}$  defined on $(\Omega,\mathcal{F},\P)$ such that we have a.s.
\begin{eqnarray*}
T& = & \inf\left\{t > 0; \: \phi \left(t\right)W_{\tau\phi(t)} =  1  \right\}    \\
& = & \varrho \circ \tau \phi (T^{\Sigma \phi})\\
&=& \tau f (T^{f})
\end{eqnarray*}
where  we used for the last identity item {\em 1.~}of Proposition \ref{prop:sigma}. The proof is now easy to complete.
\end{proof}
We mention that
relation \eqref{eq:doob-representation} was  used by Breiman \cite{Breiman-67} for relating the first crossing time of a Brownian motion over the square root boundary to the first passage time to a fixed level by the classical stationary
Ornstein-Uhlenbeck process.  Next, we need to introduce the notation
\begin{equation}
H_t(x)
 =\left(\frac{\alpha \phi(t)}{\Pi^{\alpha,\beta}\phi(t)}\right)^{\frac{1}{2}}e^{\frac{\beta}{2}
\frac{x^2}{\phi(t)\Pi^{\alpha,\beta}\phi(t)}}.
\end{equation}
Our aim now is to show that the parametric families of distributions $(\P^{\Pi^{\alpha,\beta}\phi})_{(\alpha,\beta)\in  \R^* \times \R}$ of Gauss-Markov processes are related by some simple space-time harmonic transforms.
\begin{lemma} \label{thm-gmou} For $(\alpha,\beta) \in \R^* \times \R$ and $\phi$ as above, the process $\left(H_t(X_t)\right)_{ 0 \leq t<a^{\phi}_{\alpha,-\beta}} $
is a $\PROB^{\phi}$-martingale. Furthermore,  the absolute-continuity relationship
\begin{equation}\label{absolute-continuity-ous}
   d\PROB^{\Pi^{\alpha,\beta}\phi}_{x|\mathcal{F}_t}=\frac{H_t(X_t)}{H_0(x)}d{\PROB}^{\phi}_{x|\mathcal{F}_t}
\end{equation}
holds for all $x \in \R$ and $
t < a^{\phi}_{\alpha,-\beta}$.
Consequently, for any reals $x$
and  $y$, we have
\begin{equation}
\PROB^{\Pi^{\alpha,\beta}\phi}_x\left(T_y\in dt\right)=
\frac{H_t(y)}{H_0(x)} \: \PROB^{\phi}_x\left(T_y\in
dt\right), \quad t<a^{\phi}_{\alpha,-\beta}.
\end{equation}
\end{lemma}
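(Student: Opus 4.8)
The plan is to obtain all three assertions from one computation: that $t\mapsto H_t(X_t)$ is a genuine (not merely local) $\PROB^{\phi}$--martingale whose associated Girsanov change of measure carries the Gauss--Markov process of parameter $\phi$ onto the one of parameter $\psi:=\Pi^{\alpha,\beta}\phi$. Throughout I would write $H_t(x)=A(t)\,e^{B(t)x^{2}}$ with $A(t)=(\alpha\phi(t)/\psi(t))^{1/2}$ and $B(t)=\beta/(2\phi(t)\psi(t))$, and record the structural facts the whole argument rests on: since $\Pi^{0,1}\phi=\phi\,\tau\phi$ and $\Pi^{\alpha,\beta}\phi=\alpha\phi+\beta\,\Pi^{0,1}\phi$, one has $\psi(0)=\alpha$ and, using $(\tau\phi)'=\phi^{-2}$, the Wronskian identity
\[
\phi\,\psi'-\phi'\,\psi\equiv\beta ,
\]
which I shall use in the form $\phi\,\psi'=\phi'\,\psi+\beta$. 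Moreover $\psi$ is strictly positive exactly on the interval written $[0,a^{\phi}_{\alpha,-\beta})$ in \refprop{prop:pi-prop}, so $H$ is well defined precisely there.

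\emph{Step 1: the martingale property.} First I would apply Itô's formula to $H_t(X_t)$, using $dX_t=(\phi'(t)/\phi(t))X_t\,dt+dB_t$ and the fact that $H$ is $C^{2}$ in $x$ and absolutely continuous in $t$ (so a routine version of Itô applies). The bounded-variation part is $\bigl(\partial_t H_t+(\phi'/\phi)\,x\,\partial_xH_t+\tfrac12\partial_x^{2}H_t\bigr)(X_t)\,dt$; substituting $H_t=A\,e^{Bx^{2}}$ this bracket equals $(A'/A+B)+x^{2}(B'+2B\phi'/\phi+2B^{2})$, and both coefficients vanish since both reduce, after clearing denominators, to $\phi\psi'-\phi'\psi=\beta$ (for the constant one, via $A'/A=\tfrac12(\phi'/\phi-\psi'/\psi)$). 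Hence $H_t(X_t)=H_0(x)+\int_0^{t}\tfrac{\beta X_s}{\phi(s)\psi(s)}H_s(X_s)\,dB_s$ is a nonnegative local martingale on $[0,a^{\phi}_{\alpha,-\beta})$. To upgrade it to a true martingale I would compute $\E^{\phi}_x[H_t(X_t)]$ directly: $X_t$ is Gaussian with mean $x\phi(t)$ and variance $\phi(t)\,\Pi^{0,1}\phi(t)$, so the Gaussian integral of $e^{B(t)z^{2}}$ converges exactly where $1-2B(t)\operatorname{Var}(X_t)=\alpha\phi(t)/\psi(t)>0$, i.e. on $[0,a^{\phi}_{\alpha,-\beta})$, and a one-line Gaussian computation yields $\E^{\phi}_x[H_t(X_t)]=e^{\beta x^{2}/2\alpha}=H_0(x)$ there. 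A nonnegative local martingale with constant expectation is a martingale, proving the first claim.

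\emph{Step 2: the change of measure and the target law.} Since $(H_t(X_t))$ is a positive martingale with $H_0(X_0)=H_0(x)$, formula \eqref{absolute-continuity-ous} consistently defines a probability $\mathbb{Q}_x$ on each $\mathcal{F}_t$, $t<a^{\phi}_{\alpha,-\beta}$. By Girsanov's theorem (e.g. \cite[Ch.~VIII]{Revuz-Yor-99}), $\widetilde B_t:=B_t-\int_0^{t}\tfrac{\beta X_s}{\phi(s)\psi(s)}\,ds$ is a $\mathbb{Q}_x$--Brownian motion, hence under $\mathbb{Q}_x$
\[
dX_t=\Bigl(\tfrac{\phi'(t)}{\phi(t)}+\tfrac{\beta}{\phi(t)\psi(t)}\Bigr)X_t\,dt+d\widetilde B_t=\tfrac{\psi'(t)}{\psi(t)}X_t\,dt+d\widetilde B_t ,
\]
the last equality being precisely $\phi'\psi+\beta=\phi\psi'$. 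By strong uniqueness for this linear equation, $\mathbb{Q}_x$ is the law of the Gauss--Markov process of parameter $\psi=\Pi^{\alpha,\beta}\phi$ started at $x$, i.e. $\mathbb{Q}_x=\PROB^{\Pi^{\alpha,\beta}\phi}_x$; this is \eqref{absolute-continuity-ous}.

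\emph{Step 3: the law of $T_y$, and the main obstacle.} Finally I would stop the martingale $H_\cdot(X_\cdot)$ at $T_y$: on $\{T_y=t\}$ the process $X$ is pinned at the deterministic level appearing in the definition of $T_y$ (equal to $y$ in the normalisation of \reflemma{Proposition-connecting-times}), so $H_{T_y}(X_{T_y})$ is a deterministic function of $T_y$, and optional sampling of the (bounded-below, hence after stopping uniformly integrable) martingale gives, for $t<a^{\phi}_{\alpha,-\beta}$, $\PROB^{\Pi^{\alpha,\beta}\phi}_x(T_y\in dt)=\E^{\phi}_x\!\left[\tfrac{H_{T_y}(X_{T_y})}{H_0(x)}\,;\,T_y\in dt\right]=\frac{H_t(y)}{H_0(x)}\,\PROB^{\phi}_x(T_y\in dt)$, the last assertion. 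The only non-formal point in the whole proof is ruling out that $(H_t(X_t))$ is a strict local martingale; this is exactly what the explicit Gaussian evaluation in Step 1 takes care of, and it is also what forces the horizon to be exactly $a^{\phi}_{\alpha,-\beta}$, the largest interval on which $\Pi^{\alpha,\beta}\phi$ stays positive. The optional-sampling step needs only the usual minor care (work on $\{T_y\le t\}$ with the stopped martingale and let $t\uparrow a^{\phi}_{\alpha,-\beta}$).
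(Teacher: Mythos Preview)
Your argument is correct and reaches the same three conclusions as the paper, but by a genuinely different route. The paper does not run It\^o's formula for general $\phi$: it first treats the case $\phi\equiv 1$, showing by an explicit computation of $\frac{\beta}{2}\frac{B_t^2}{\alpha+\beta t}$ that $(H_t(B_t))$ is a local martingale, and then appeals to the identity $\E[e^{-\lambda B_t^2/2}]=(1+\lambda t)^{-1/2}$ to get the constant expectation; the general case is then obtained by observing that under $\P^{\phi}$ the process $(H_t(X_t))$ has the same law as the time-changed process $(H_{\tau\phi(t)}(B_{\tau\phi(t)}))$ under $\P$. The Girsanov and optional-stopping steps are then essentially the same as yours. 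By contrast, you work directly with a general $\phi$ and organize the whole computation around the Wronskian identity $\phi\psi'-\phi'\psi=\beta$, which simultaneously kills the drift in It\^o's formula and identifies the new drift after the change of measure; the true-martingale step is handled by the Gaussian integral $\E^{\phi}_x[e^{B(t)X_t^2}]$, where the convergence condition $1-2B(t)\,\mathrm{Var}(X_t)=\alpha\phi(t)/\psi(t)>0$ pinpoints the horizon. Your approach is more self-contained and makes the algebraic reason for the result (the constant Wronskian of $\phi$ and $\Pi^{\alpha,\beta}\phi$) explicit; the paper's approach trades that computation for the time-change reduction, which ties the lemma back to the Doob representation used elsewhere in the section. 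One small remark: your parenthetical ``bounded-below, hence after stopping uniformly integrable'' is not quite the right justification---nonnegativity alone does not yield uniform integrability---but the argument you actually describe (work on $\{T_y\le t\}$ for fixed $t$ via the tower property) is exactly what is needed and is what the paper does as well.
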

\begin{proof}  First,  the It\^o  formula   yields
\[ \frac{\beta}{2 }\frac{B_t^2}{\alpha+\beta t} =\beta \int_0^t \frac{B_s}{\alpha+\beta s}dB_s-\frac{\beta^2}{2} \int_0^t \frac{B^2_s}{(\alpha+\beta s)^2}ds-\frac{1}{2}\log(\alpha+\beta t). \]
Thus, in the special case $\phi\equiv 1$,  the process $\left(H_t(X_t)\right)_{0 \leq t<a^{\phi}_{\alpha,-\beta}}$ is a $\P$-local martingale.  Moreover, from the well-known identity $\E\left[e^{-\frac{\lambda}{2} B_t^2}\right]=(1+\lambda t)^{-1/2},\:\lambda >-1/t$, see e.g. \cite[p.441]{Revuz-Yor-99}, we deduce that, for all $t <a^{\phi}_{\alpha,-\beta}$, we have
$\E[H_t(B_t)]=1$. Hence it is a true martingale. The $\mathbb P^{\phi}$-martingale property of $\left(H_t(X_t)\right)_{ 0 \leq t<a^{\phi}_{\alpha,-\beta}}$ follows from the fact it has
 the same distribution as
the process $$\left(H_{\tau {\phi}(t)}(X_{\tau {\phi}(t)})\right)_{ 0 \leq t<a^{\phi}_{\alpha,-\beta}}$$ under $\P$.
Next, since we have on the one hand
\[ d\left<\beta \int_0^{\tau\phi(.)} \frac{B_s}{\alpha+\beta s}dB_s,X_.\right>_t=\beta \frac{X_t}{\Pi^{\alpha,\beta}\phi(t)}dt\]
and on the other hand
\[\frac{(\Pi^{\alpha,\beta}\phi)'(t)}{\Pi^{\alpha,\beta}\phi(t)} = \frac{\phi'(t)}{\phi(t)}+\frac{\beta}{\Pi^{\alpha,\beta}\phi(t)},\]
we deduce the absolute continuity relationship by an application of  Girsanov's theorem. Next, on the event $\{T_y
\leq t\} \in \mathcal{F}_{t\wedge T_y}$, we have $H_{t\wedge T_y}(X_{t\wedge T_y})= H_{T_y}(y)$. Now, Doob's optional
stopping theorem implies that
\begin{eqnarray*}
   \PROB^{\Pi^{\alpha,\beta}\phi}_x(T_y \leq t )&=&\E^{\phi}_x\left[{\mathbb{I}}_{\{T_y \leq t
   \}}\frac{H_t(X_t)}{{H_0(x)}}\right]\\
   &=&\E^{\phi}_x\left[{\mathbb{I}}_{\{T_y \leq t
   \}}\E^{\phi}_x\left[\frac{H_t(X_t)}{H_0(x)}\big|\mathcal{F}_{t\wedge T_y}\right]\right]\\
   &=& \E^{\phi}_x\left[\frac{H_{T_y}(y)}{H_0(x)}{\mathbb{I}}_{\{T_y \leq t
   \}}\right].
\end{eqnarray*}
Our claim follows then by differentiation.
\end{proof}
Now, we are ready to complete a version of the second proof of Theorem \ref{thm2}. For the sake of clarity, we assume that $f$ is  continuously differentiable and thus according to Strassen \cite{Strassen-67}, the law of $T^f$ is absolutely continuous with a continuous density which we denote by $p^f$.
Next, let $\phi=\Sigma f$ and thus, by definition, $f^{\alpha,\beta} = \Sigma \circ \Pi^{\alpha,-\beta} \phi$. Since $\Sigma$ is an involution, we have from  Lemma \ref{Proposition-connecting-times} that  $T^{f^{\alpha,\beta}}=\tau \circ \Pi^{\alpha,-\beta} \phi \left(T\right) a.s.$  Using the fact that
\begin{equation} \label{eq:it}
\varrho \circ \tau \circ \Pi^{\alpha,-\beta} \phi = \varrho \circ \tau \circ \Sigma f^{\alpha,\beta} = \tau f^{\alpha,\beta},
\end{equation}
we get,  for any $t<\zeta_{\alpha, \beta}$,
\begin{eqnarray*}
p^{{f^{\alpha,\beta}}}\left(t\right)dt &=&f^{\alpha,\beta}(t)^{-2}
\P_0^{\Pi^{\alpha,-\beta} \phi}\left(T\in  \tau f^{\alpha,\beta} (t) \right)\\
&=&f^{\alpha,\beta}(t)^{-2}
\P_0^{\Pi^{\alpha,-\beta} \phi}\left(T\in  \tau f \left(\frac{\alpha^2 t}{1+\alpha \beta t}\right) \right)
\end{eqnarray*}
where we used the identity $\tau f^{\alpha,\beta}  (t)=\tau f \left(\frac{\alpha^2 t}{1+\alpha \beta t}\right)$ which follows readily by a change of variable.
Then, Proposition  \ref{thm-gmou} combined with the identities
\begin{eqnarray*}
\left(\Pi^{\alpha,-\beta}\phi(\varrho \circ \tau \circ \Pi^{\alpha,-\beta} \phi)\right)^{-1} &=&  \Sigma \circ \Pi^{\alpha,-\beta}\phi = f^{\alpha,\beta}, \\
 \left(\phi\left(\tau f \left(\frac{\alpha^2 t}{1+\alpha \beta t}\right)\right)\right)^{-1} &=&  \left(\phi\left(\varrho \circ \tau \phi \left(\frac{\alpha^2 t}{1+\alpha \beta t}\right)\right)\right)^{-1}=f\left(\frac{\alpha^2 t}{1+\alpha \beta t}\right),
 \end{eqnarray*}
  yields, for any $t<\zeta_{\alpha,\beta}$,
  \begin{eqnarray*}
\P_0^{\Pi^{\alpha,-\beta} \phi}\left(T\in  \tau f \left(\frac{\alpha^2 t}{1+\alpha \beta t}\right) \right) = (1+\alpha \beta t)^{1/2}e^{-\frac{\beta \alpha  f^{\alpha,\beta}(t)^2}{2 (1+\alpha \beta t)} }
\P_0^{ \phi}\left(T\in  \tau f \left(\frac{\alpha^2 t}{1+\alpha \beta t}\right) \right),
\end{eqnarray*}
and thus
\begin{eqnarray*}
p^{{f^{\alpha,\beta}}}\left(t\right)dt&=&f^{-2}\left(\frac{\alpha^2 t}{1+\alpha \beta t}\right)(1+\alpha \beta t)^{1/2} e^{-\frac{\beta \alpha  f^{\alpha,\beta}(t)^2}{2 (1+\alpha \beta t)} }
\P_0^{ \phi}\left(T\in  \tau f \left(\frac{\alpha^2 t}{1+\alpha \beta t}\right) \right).
\end{eqnarray*}
Using again Lemma \ref{Proposition-connecting-times} we finally obtain
\begin{eqnarray*}
p^{{f^{\alpha,\beta}}}\left(t\right)&=& \alpha^{2}(1+\alpha \beta t)^{-3/2} e^{-\frac{\beta \alpha f^{\alpha,\beta}(t)^2}{2 (1+\alpha \beta t)}}
p^{{f}}\left(\frac{\alpha^2 t}{1+\alpha \beta t}\right)\\
&=& \alpha^3 (1+\alpha \beta t)^{-5/2} e^{-\frac{\alpha \beta f^{\alpha,\beta}(t)^2}{2(1+\alpha \beta t)} }S^{\alpha,\beta}p^{{f}}\left(t\right)
\end{eqnarray*}
which is the main identity \eqref{eq:fptr}.
\subsection{ The  approach via the Lie group symmetries of the heat equation}
The purpose of this proof is to show how the symmetry groups method may be used to derive our main identity \eqref{eq:fptr}.  We recall that the application of Lie group theory to solve differential equations dates back to the original work of S. Lie. It provides an effective mechanism for computing a wide variety of new solutions of a specific differential equation from known ones. An excellent account of this technique can be found in the monograph of Olver \cite{Olver}. We mention that recently Lescot and Zambrini \cite{LZ} resort to Lie
 group techniques for the study of some diffusions with a view towards  stochastic symplectic geometry.  Before describing the symmetries of the heat equation, that is the one-parameter group of transformations leaving invariant the space  of positive solutions of this equation, denoted throughout by $\mathcal{H}$, we state the following result which relates the boundary crossing problem to the study of the heat equation.  These claims can be found in  Theorem 1.1 and Lemma 1.4  of Lerche \cite{Lerche-86}.
\begin{prop}
Let us assume that the function $f$ is infinitely continuously differentiable on $\R^+$ and define the domain $D^f$ by $D^f = \{(x,t) \in \R\times \R^+; \: x\leq f(t)\}$.  Then there exists a unique (strong) solution to the following boundary value problem
\begin{equation}\label{eq:he}
 \hspace{- 2cm} \mathcal{H}(f): \hspace{1cm}
\begin{cases}
 \frac{\partial h}{\partial t} (x,t)=\frac{1}{2}\frac{\partial^2 h}{\partial x^2} (x,t)& \textrm{ on } D^f,\nonumber \\
 h(f(t),t) = 0 & \textrm{ for all } t>0,\\
 h(.,0) = \delta_0(.) & \textrm{ on } (-\infty,f(0)), \nonumber \\
  \end{cases}
 \end{equation}
 where $\delta_0$ stands for the dirac point mass at $0$. Moreover, $h$ admits the following probabilistic representation
 \begin{equation} \label{eq:pr}
  h(x,t) dx = \P\left(B_t \in dx, t<T^f\right).
  \end{equation}
 Finally, the law of $T^f$ is absolutely continuous with a continuous density given by
  \begin{equation}\label{eq:df}
   p^f(t) =-\frac{1}{2}\frac{\partial h }{\partial x} (x,t) _{| x=f(t)}.\end{equation}
\end{prop}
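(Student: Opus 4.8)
The plan is to run the classical programme for the heat operator in a domain with moving boundary: first solve $\mathcal H(f)$ by potential theory, then identify the solution probabilistically, and finally extract the first‑passage density by differentiating the survival probability. Using the reflection symmetry of Brownian motion I reduce to the case $f(0)>0$, so that $(0,0)$ lies in the interior of $D^f$ and $T^f>0$ a.s.; the case $f(0)<0$ follows from $T^f\stackrel{d}{=}T^{-f}$. Since $f\in C^\infty(\R^+)$, the lateral boundary $\Gamma=\{(f(t),t):t\ge0\}$ of $D^f$ is a smooth curve, and I would produce $h$ in the form
\[
h(x,t)=p_t(x)-\int_0^t p_{t-s}\big(x-f(s)\big)\,\psi(s)\,ds,\qquad p_u(y)=(2\pi u)^{-1/2}e^{-y^2/(2u)},
\]
a free heat kernel corrected by a heat potential carried by $\Gamma$, with $\psi$ determined by imposing the Dirichlet condition $h(f(t),t)=0$. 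This reduces to a classical Volterra‑type integral equation for $\psi$ whose kernel has an integrable singularity on the diagonal, uniquely solvable by Picard iteration; it produces a unique $h\in C^\infty(D^f\setminus\{(0,0)\})$ which is smooth up to $\Gamma$ — so that the trace $\partial_x h(f(t),t)$ is well defined and continuous in $t$ — and which, with its $x$‑derivatives, decays like a Gaussian as $x\to-\infty$, locally uniformly in $t$. Uniqueness within the class of solutions of sub‑Gaussian growth follows from the parabolic maximum principle. (This is the content of Lerche's Theorem~1.1, and the construction is carried out in full in Friedman \cite{Friedman}.)

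For the probabilistic representation \eqref{eq:pr} I would argue by duality, since the $\delta_0$ initial datum prevents a direct Feynman–Kac argument on $h$ itself. Fix $t>0$ and $\varphi\in C_c\big((-\infty,f(t))\big)$, and let $w(y,s)$, $0\le s\le t$, solve the backward problem $\partial_sw+\tfrac12\partial_{yy}w=0$ on $\{y<f(s),\,0<s<t\}$ with $w(\cdot,t)=\varphi$ and $w(f(s),s)=0$; this $w$ exists, is bounded by the maximum principle, and is smooth by the same potential‑theoretic construction. By It\^o's formula the drift and second‑order terms combine into the vanishing backward heat operator, so $\big(w(B_s,s)\big)_{0\le s\le t\wedge T^f}$ is a bounded martingale, and optional stopping at $t\wedge T^f$ gives
\[
w(0,0)=\E_0\!\left[w(B_t,t)\,\Ind{T^f>t}\right]+\E_0\!\left[w\big(f(T^f),T^f\big)\,\Ind{T^f\le t}\right]=\E_0\!\left[\varphi(B_t)\,\Ind{T^f>t}\right],
\]
the second term vanishing since $w\equiv0$ on $\Gamma$. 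On the other hand, multiplying $\partial_th=\tfrac12\partial_{xx}h$ by $w$ and $\partial_sw=-\tfrac12\partial_{yy}w$ by $h$, subtracting and integrating over $D^f\cap\{0<s<t\}$, the lateral boundary terms drop (both $h$ and $w$ vanish on $\Gamma$), the slice $\{s=0\}$ contributes $\int h(x,0)w(x,0)\,dx=w(0,0)$ because $h(\cdot,0)=\delta_0$ (rigorously: use the slice $\{s=\varepsilon\}$ and let $\varepsilon\downarrow0$, with $h(\cdot,\varepsilon)\Rightarrow\delta_0$), and the slice $\{s=t\}$ contributes $\int_{-\infty}^{f(t)}h(x,t)\varphi(x)\,dx$. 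Comparing the two evaluations of $w(0,0)$ and letting $\varphi$ vary gives $h(x,t)\,dx=\P\big(B_t\in dx,\,t<T^f\big)$ on $(-\infty,f(t))$, i.e.\ \eqref{eq:pr}; matching this with the strong Markov decomposition of $\P_0(B_t\in dx)$ at $T^f$ moreover identifies $\psi(s)\,ds=\P_0(T^f\in ds)$, so $T^f$ already has a continuous density.

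Finally, integrating \eqref{eq:pr} in $x$ gives $\P(T^f>t)=\int_{-\infty}^{f(t)}h(x,t)\,dx$, hence $\P(T^f\le t)=1-\int_{-\infty}^{f(t)}h(x,t)\,dx$. Differentiating in $t$ — legitimate by the regularity and decay obtained above — and using Leibniz' rule, the boundary condition $h(f(t),t)=0$, the equation $\partial_th=\tfrac12\partial_{xx}h$, and $\partial_xh(x,t)\to0$ as $x\to-\infty$, I obtain
\[
p^f(t)=-\frac{d}{dt}\int_{-\infty}^{f(t)}h(x,t)\,dx=-h(f(t),t)\,f'(t)-\frac12\int_{-\infty}^{f(t)}\partial_{xx}h(x,t)\,dx=-\frac12\,\partial_xh(x,t)\big|_{x=f(t)},
\]
which is \eqref{eq:df}, and continuity of $t\mapsto p^f(t)$ is inherited from the smoothness of $h$ up to $\Gamma$. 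The genuine difficulty is entirely analytic: one must know that $h$ and $w$ are smooth up to the moving boundary $\Gamma$ so that the normal trace $\partial_xh(f(\cdot),\cdot)$ exists and is continuous, and one must control the Gaussian decay of $h$ and $\partial_xh$ at $-\infty$ uniformly enough to justify the integrations by parts and the differentiation under the integral sign against the distributional datum $\delta_0$. The heat‑potential reduction to a Volterra equation delivers exactly this from $f\in C^\infty$ — indeed $f\in C^1$ with H\"older derivative would already suffice — which is precisely what Lerche's Theorem~1.1 and Lemma~1.4 and Friedman's monograph provide, so in practice I would invoke those rather than reproduce the estimates.
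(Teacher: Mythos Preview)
Your proposal is correct and essentially matches the paper's approach: the paper does not give an independent proof of this proposition but simply states that ``these claims can be found in Theorem~1.1 and Lemma~1.4 of Lerche~\cite{Lerche-86}'' (with Friedman~\cite{Friedman} cited in the remark for the weaker regularity setting), and your sketch of the heat-potential/Volterra construction, the duality argument for the probabilistic representation, and the Leibniz computation for the density is precisely the classical programme carried out in those references --- which you yourself acknowledge invoking at the end.
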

   \begin{remark}
   We point out that one  may weaken the above assumption that $f$ is infinitely continuously differentiable in order that the solution of the boundary value problem $\mathcal{H}(f)$ admits the probabilistic representation \eqref{eq:pr}. We refer  the interested readers to the monograph of Friedman \cite{Friedman}.
   \end{remark}



  We now turn to the calculation of symmetry groups of the heat equation
\begin{equation}
  \frac{\partial h}{\partial t} (x,t)=\frac{1}{2}\frac{\partial^2 h}{\partial x^2} (x,t).
\end{equation}
 This equation has been intensively studied and its symmetry groups are well known. A detailed description of the procedure for finding its most general one parameter group of transformations   is given in \cite[Section 2.4, p.117]{Olver}. However, therein the heat equation is considered without the factor $\frac{1}{2}$. This obviously affects slightly the symmetry groups but we shall provide the correct expression for the Lie algebra basis and the corresponding  transformation. After some easy but tedious computation, one finds that the Lie algebra of infinitesimal symmetries  of the heat equation (with the factor $\frac{1}{2}$) is spanned by the six vector fields, where $x,t$ are the two independent variables and $h$ is the dependent variable,
 \begin{eqnarray*}
  {\bf{v}}_1 &=& \frac{\partial }{\partial x} , \: {\bf{v}}_2 = \frac{\partial }{\partial t} , \: {\bf{v}}_3 = h\frac{\partial }{\partial h}, \\
   {\bf{v}}_4 &=& x\frac{\partial }{\partial x} + 2 t \frac{\partial }{\partial t}, \: {\bf{v}}_5 = t\frac{\partial }{\partial x} - {\bf{2}} x h\frac{\partial }{\partial h},\\
    {\bf{v}}_6 &=& 4tx\frac{\partial }{\partial x} + 4 t^2 \frac{\partial }{\partial t} -({\bf{2}}x^2+2t)h\frac{\partial }{\partial h},
 \end{eqnarray*}
 and the infinite-dimensional subalgebra ${\bf{v}}_{u}=u(x,t) \frac{\partial }{\partial h}$ where $u$ is an arbitrary solution of the heat equation (we put in bold face the (two) coefficients of the vector fields which have been affected by the factor $\frac{1}{2}$). Note that the 6-dimensional Lie algebra is the semidirect sum of $\mathfrak{sl}(2,\mathbb{R})$ with the Heisenberg-Weyl algebra.  Exponentiating the basis produces the following one-parameter group of transformations leaving invariant $\mathcal{H}$, for any $\varepsilon \in \R$,
\begin{eqnarray*}
h^{(1)}(x,t) &=& \exp(\varepsilon  {\bf{v}}_1)h(x,t)= h(x-\varepsilon,t),\\
h^{(2)}(x,t) &=& \exp(\varepsilon  {\bf{v}}_2)h(x,t)= h(x,t-\varepsilon),\\
h^{(3)}(x,t) &=& \exp(\varepsilon  {\bf{v}}_3)h(x,t)=e^{\varepsilon} h(x,t),\\
h^{(4)}(x,t) &=& \exp(\varepsilon  {\bf{v}}_4)h(x,t)= h(e^{-\varepsilon}x,e^{-2\varepsilon}t),\\
h^{(5)}(x,t) &=& \exp(\varepsilon  {\bf{v}}_5)h(x,t)= e^{-{\bf{4}}\varepsilon x+{\bf{8}}\varepsilon^2 t}h(x-2\varepsilon t,t),\\
h^{(6)}(x,t) &=& \exp(\varepsilon  {\bf{v}}_6)h(x,t)=\frac{1}{\sqrt{1+4\varepsilon t}}e^{-\frac{{\bf{2}} \varepsilon x^2}{1+4 \varepsilon t}} h\left(\frac{ x}{1+4 \varepsilon t},\frac{t}{1+4 \varepsilon t}\right),\\
h^{(u)}(x,t) &=& \exp(\varepsilon  {\bf{v}}_{u})h(x,t)=h(x,t) +\varepsilon u(x,t).
\end{eqnarray*}
 Note that the symmetry groups  associated to these  transformations  provide the explanation of the invariance of the law of the Brownian motion under some specific transformations. More precisely, the groups $h^{(1)}$ and $h^{(2)}$ show the time-and space-invariance of the law of the Brownian motion, the scaling property turns up in the composition of  $h^{(3)}$ and $h^{(4)}$, while $h^{(5)}$ represents the Girsanov transform (Doob's $h$-transform) connecting the law of Brownian motion with  different drifts. The group $h^{(6)}$ is intimately connected to the change of measure  (another Doob's $h$-transform) connecting the law of the Brownian motion with its bridges and thus, from  Pitman and Yor \cite{Pitman-Yor-82a}, one can explain the time-inversion property of the Brownian motion.
One can obviously build up some transformations leaving invariant the space $\mathcal{H}$ by considering any compositions of the above transformations. However, in the boundary crossing context, the most original and relevant combination of the above symmetries appears to be the  two-parameter transformation $h^{(\alpha,\beta)}$ defined, for any $\alpha>0$ and $\beta\in \R$, by
\begin{eqnarray} \label{eq:th}
h^{(\alpha,\beta)}(x,t) &=& \exp(\ln(\alpha)  {\bf{v}}_3) \circ \exp(-\ln(\alpha)  {\bf{v}}_4) \circ \exp\left(\frac{\beta }{4 \alpha}  {\bf{v}}_6\right)  h(x,t) \nonumber\\
&=&\frac{\alpha}{\sqrt{1+\alpha \beta t}}e^{-\frac{\alpha \beta x^2}{2(1+\alpha \beta t)}} h\left(\frac{\alpha x}{1+\alpha \beta  t},\frac{\alpha ^2 t}{1+\alpha \beta t}\right).
\end{eqnarray}
 We are now ready to state the following result which explains how this specific composition of symmetries  affects the domain and the boundary conditions of the boundary value problem $\mathcal{H}(f)$ defined above.
 \begin{prop}
 Let $h$ be the solution of the boundary value problem $\mathcal{H}(f)$. Then, for any $\alpha>0$, $\beta \in \R$, the mapping $h^{(\alpha,\beta)}$ defined in \eqref{eq:th} is the solution to the boundary value problem $\mathcal{H}(f^{\alpha,\beta})$.
\end{prop}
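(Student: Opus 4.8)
The plan is to verify directly the three defining conditions of the boundary value problem $\mathcal{H}(f^{\alpha,\beta})$ for the function $h^{(\alpha,\beta)}$ and then to invoke uniqueness. Throughout, write $\bar{x}=\bar{x}(x,t)=\frac{\alpha x}{1+\alpha\beta t}$ and $\bar{t}=\bar{t}(t)=\frac{\alpha^{2}t}{1+\alpha\beta t}$, so that $h^{(\alpha,\beta)}(x,t)=\frac{\alpha}{\sqrt{1+\alpha\beta t}}\,e^{-\frac{\alpha\beta x^{2}}{2(1+\alpha\beta t)}}\,h(\bar{x},\bar{t})$; on the admissible time range $0\le t<\zeta_{\alpha,\beta}$ one has $1+\alpha\beta t>0$, so the Gaussian prefactor is finite and strictly positive there. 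The PDE part is essentially for free: since $h^{(\alpha,\beta)}$ is by construction the image of $h$ under the composition $\exp(\ln\alpha\,\mathbf{v}_{3})\circ\exp(-\ln\alpha\,\mathbf{v}_{4})\circ\exp(\frac{\beta}{4\alpha}\,\mathbf{v}_{6})$ of symmetries of $\mathcal{H}$, it solves the heat equation (with the factor $\tfrac{1}{2}$) at every point $(x,t)$ for which $(\bar{x},\bar{t})$ lies in $D^{f}$.

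Next I would identify the domain. Using the explicit representation \eqref{elementary-mappings}, that is $f^{\alpha,\beta}(t)=\frac{1+\alpha\beta t}{\alpha}f(\bar{t})$, together with $\alpha>0$ and $1+\alpha\beta t>0$, the inequality $x\le f^{\alpha,\beta}(t)$ is equivalent to $\frac{\alpha x}{1+\alpha\beta t}\le f(\bar{t})$, i.e.\ to $\bar{x}\le f(\bar{t})$. Hence $(x,t)\mapsto(\bar{x},\bar{t})$ is a bijection of $D^{f^{\alpha,\beta}}$ (restricted to $0\le t<\zeta_{\alpha,\beta}$) onto $D^{f}$, and combined with the preceding paragraph this shows that $h^{(\alpha,\beta)}$ solves the heat equation on all of $D^{f^{\alpha,\beta}}$. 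The very same computation yields the boundary condition: if $x=f^{\alpha,\beta}(t)$ then $\bar{x}=f(\bar{t})$, so $h(\bar{x},\bar{t})=0$ and therefore $h^{(\alpha,\beta)}(f^{\alpha,\beta}(t),t)=0$ for all such $t>0$.

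It then remains to check the initial condition. Letting $t\downarrow 0$ we have $\bar{t}\to 0$, $\bar{x}\to\alpha x$, and the prefactor tends to $\alpha\,e^{-\alpha\beta x^{2}/2}$, so that $h^{(\alpha,\beta)}(\cdot,0)=\alpha\,e^{-\alpha\beta(\cdot)^{2}/2}\,\delta_{0}(\alpha\,\cdot)$ as distributions; using the scaling identity $\delta_{0}(\alpha x)=\alpha^{-1}\delta_{0}(x)$ and evaluating the smooth factor at the support point $x=0$ gives $h^{(\alpha,\beta)}(\cdot,0)=\delta_{0}(\cdot)$ on $(-\infty,f^{\alpha,\beta}(0))$, the latter interval being nonempty since $f^{\alpha,\beta}(0)=f(0)/\alpha>0$. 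Having verified all three conditions, the uniqueness assertion of the Proposition of Lerche quoted above (\cite{Lerche-86}) forces $h^{(\alpha,\beta)}$ to coincide with the solution of $\mathcal{H}(f^{\alpha,\beta})$; combining this with \eqref{eq:df} and differentiating then reads off the identity \eqref{eq:fptr} of Theorem \ref{thm2}.

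The only genuinely delicate point I expect is the bookkeeping around the initial datum: making rigorous the heuristic that $\delta_{0}$ rescales and the Gaussian factor evaluates to $1$ (for instance by testing $h^{(\alpha,\beta)}(\cdot,t)$ against a continuous, compactly supported test function on $(-\infty,f^{\alpha,\beta}(0))$ and passing to the limit $t\downarrow 0$ under the probabilistic representation \eqref{eq:pr}), while simultaneously keeping track of the admissible time interval, which contracts to the finite interval $[0,\zeta_{\alpha,\beta})$ precisely in the case $\alpha\beta<0$. The algebraic identities relating the domains and the curves are immediate from \eqref{elementary-mappings} and require no further work.
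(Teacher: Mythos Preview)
Your argument is correct and follows essentially the same route as the paper: verify that the symmetry-transported function solves the heat equation, that the boundary condition transports under the map $(x,t)\mapsto(\bar x,\bar t)$ via the representation \eqref{elementary-mappings}, and that the initial $\delta_0$ is preserved. The only cosmetic difference is in the handling of the initial datum: the paper first invokes the Brownian scaling property to conclude that $h^{(\alpha,0)}$ solves $\mathcal{H}(f^{\alpha,0})$, hence $h^{(\alpha,0)}(\cdot,0)=\delta_0$, and then multiplies by the Gaussian factor at $t=0$, whereas you compute $\alpha\,e^{-\alpha\beta(\cdot)^2/2}\delta_0(\alpha\,\cdot)=\delta_0$ directly via the distributional scaling of $\delta_0$; these are the same computation packaged differently.
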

 \begin{proof}
We assume without loss of generality that $\beta>0$. It is plain that if $f$ is infinitely continuously differentiable then so is $f^{\alpha,\beta}$. Then, from the symmetry property of the transformations and the fact that $h$ is a solution to the heat equation on $D^f$, it is easy to check that the function $h^{(\alpha,\beta)}$ is a solution to the heat equation on $D^{f^{\alpha,\beta}}$. Moreover, we have, for all $t>0$,
\begin{eqnarray*}
h^{(\alpha,\beta)}(f^{\alpha,\beta}(t),t)&=&\frac{\alpha}{\sqrt{1+\alpha \beta t}}e^{-\frac{\alpha \beta (f^{\alpha,\beta}(t))^2}{2(1+\alpha \beta t)}} h\left(\frac{\alpha f^{\alpha,\beta}(t)}{1+\alpha \beta  t},\frac{\alpha ^2 t}{1+\alpha \beta t}\right)\\
&=&\frac{\alpha}{\sqrt{1+\alpha \beta t}}e^{-\frac{\alpha \beta (f^{\alpha,\beta}(t))^2}{2(1+\alpha \beta t)}} h\left(f\left(\frac{\alpha ^2 t}{1+\alpha \beta t}\right),\frac{\alpha ^2 t}{1+\alpha \beta t}\right)\\
&=&0
\end{eqnarray*}
  since $h\left(f(t),t\right)=0$ for all $t>0$. On the other hand, observing from the scaling property of the Brownian motion $B$ that, for any $\alpha>0$, $h^{(\alpha,0)}$ is a solution to  the problem $\mathcal{H}(f^{\alpha,0})$ and thus in particular $h^{(\alpha,0)}(.,0)=\delta_0(.)$ on  $ (-\infty,  \frac{f(0)}{\alpha})$. We complete the proof by noting that
\begin{eqnarray*}
h^{(\alpha,\beta)}(x,0)&=&e^{-\frac{\alpha \beta x^2}{2}} h^{(\alpha,0)}\left(\alpha x,0\right)\\
&=&\delta_0(x).
\end{eqnarray*}
 \end{proof}
 It is now an easy exercise to derive our main identity   \eqref{eq:fptr}. Indeed, since
from \eqref{eq:df}, we have  $p^{f^{\alpha,\beta}}(t) =-\frac{1}{2}\frac{\partial h^{(\alpha,\beta)}}{\partial x} (x,t) _{| x=f^{\alpha,\beta}(t)}$. Thus, differentiating \eqref{eq:th} and using the condition $h\left(f(t),t\right)=0$ for all $t>0$, we get that
\begin{eqnarray} \label{eq:gd}
p^{f^{\alpha,\beta}}(t) &=&-\frac{1}{2}\frac{\alpha^2}{(1+\alpha \beta t)^{3/2}}e^{-\frac{\alpha \beta (f^{\alpha,\beta}(t))^2}{2(1+\alpha \beta t)}} \frac{\partial h }{\partial x} \left( x,\frac{\alpha ^2 t}{1+\alpha \beta t}\right)_{| x=f\left(\frac{\alpha ^2 t}{1+\alpha \beta t}\right)}.
\end{eqnarray}
Using successively  the fact that $h$ is solution to the heat equation on $D^f$ and  the condition $h(f(t),t)=0$ for all $t>0$  yields
\begin{eqnarray*}
 \frac{\partial h }{\partial x} \left( x,\frac{\alpha ^2 t}{1+\alpha \beta t}\right)_{| x=f\left(\frac{\alpha ^2 t}{1+\alpha \beta t}\right)} &=& \int_{-\infty}^{f\left(\frac{\alpha ^2 t}{1+\alpha \beta t}\right)} \frac{\partial^2 h}{\partial y^2} \left(y,\frac{\alpha ^2 t}{1+\alpha \beta t}\right)dy\\
&=& 2\int_{-\infty}^{f\left(\frac{\alpha ^2 t}{1+\alpha \beta t}\right)} \frac{\partial h}{\partial t} \left(y,\frac{\alpha ^2 t}{1+\alpha \beta t}\right)dy\\
& =& 2 \frac{(1+\alpha \beta t)^2}{\alpha ^2 }\frac{d}{dt}
\int_{-\infty}^{f\left(\frac{\alpha ^2 t}{1+\alpha \beta t}\right)}  h\left(y,\frac{\alpha ^2 t}{1+\alpha \beta t}\right)dy \\
& =& 2 \frac{(1+\alpha \beta t)^2}{\alpha ^2 }\frac{d}{dt} \P\left(T^{f}>\frac{\alpha ^2 t}{1+\alpha \beta t}\right)
 \\
& =&- 2 p^{{f}}\left(\frac{\alpha ^2 t}{1+\alpha \beta t}\right)
\end{eqnarray*}
which combined with the identity \eqref{eq:gd} gives \eqref{eq:fptr}.

 We point out that by following a similar line of reasoning, we can apply the group transformation $h^{(5)}$ to the boundary crossing problem.  That results in adding a linear trend to the considered curve.  The latter transformation can also be composed in a similar way with the transformation $h^{(\alpha,\beta)}$.   The symmetry group approach has the great advantage to explain that beside the transformations we just discussed, there does not exist other simple and attainable identities relating first passage time distributions  to some parametric family of curves.

\bibliographystyle{amsplain}

\end{document}